 \theoremstyle{definition}
 \newtheorem{Theorem}{Theorem}[section]
 \newtheorem{Corollary}[Theorem]{Corollary}
 \newtheorem{Lemma}[Theorem]{Lemma}
 \newtheorem{Proposition}[Theorem]{Proposition}
 \newtheorem{Definition}[Theorem]{Definition}
  \newtheorem{Remark}[Theorem]{Remark}
 \newtheorem{Example}[Theorem]{Example}
 \newtheorem{Notation}[Theorem]{Notation}
\newtheorem*{DiagonalFPT1}{Theorem \ref{DiagonalFPT: T}:  Part I}
\newtheorem*{DiagonalFPT2}{Theorem \ref{DiagonalFPT: T}:  Part II}
\newtheorem*{DTI1}{Theorem \ref{DiagonalTestIdeal: T}:  Part I}
\newtheorem*{DTI2}{Theorem \ref{DiagonalTestIdeal: T}:  Part II}
\newtheorem*{DTI3}{Theorem \ref{DiagonalTestIdeal: T}:  Part III}
\newtheorem*{JN1}{Theorem \ref{JumpingNumbers: T}:  Part I}
\newtheorem*{JN2}{Theorem \ref{JumpingNumbers: T}:  Part II}
\newtheorem*{FermatFPTCorollary}{Corollary \ref{FermatFPT: C}}
\newcommand{\up}[1]{\left\lceil #1 \right\rceil}
\newcommand{\mfrak}[1]{\mathfrak{#1}}
\renewcommand{\k}{\boldsymbol{k}}
\newcommand{\ve}{\boldsymbol{\text{v}}}
\newcommand{\uu}{\boldsymbol{u}}
\newcommand{\x}{\boldsymbol{x}}
\newcommand{\m}{\mfrak{m}}
\renewcommand{\bold}[1]{\mathchoice{\hbox{\boldmath $\displaystyle #1$}}
        {\hbox{\boldmath $\textstyle #1$}}
        {\hbox{\boldmath $\scriptstyle #1$}}
        {\hbox{\boldmath $\scriptscriptstyle #1$}}}
\newcommand{\fpt}[1]{\bold{\operatorname{fpt}}_{\mathfrak{m}}(#1)}
\newcommand{\tr}[2]{\left \langle {#1} \right \rangle_{#2}}
\newcommand{\test}[3]{\bold{\tau} \left({#3} \bullet {#2} \right)}
\newcommand{\pideal}[3]{\left( {#1}^{#2} \right)^{\left [\frac{1}{p^{#3}} \right]}}
\newcommand{\trd}[2]{\tr{ \frac{1}{d_{#1}}}{#2}}
\newcommand{\bracket}[2]{ {#1}^{ \left [ p^{#2} \right]}}
\newcommand{\error}{\varepsilon}
\newcommand{\base}{\operatorname{base}}
\newcommand{\0}{\bold{0}}
\renewcommand{\l}{\ell}
\newcommand{\set}[1]{ \left\{ \, #1 \, \right\}}
\renewcommand{\(}{\left(}
\renewcommand{\)}{\right)}
\newcommand{\emw}{\omega}
\newcommand{\e}{e}
\renewcommand{\th}{\text{th}}
\newcommand{\Dee}{\bold{\operatorname{D}}}
\newcommand{\DDelta}{\bold{\Delta}}
\newcommand{\ddelta}{\bold{\delta}}
\newcommand{\vleq}{\preccurlyeq}
\newcommand{\vl}{\prec}
\newcommand{\vgeq}{\succcurlyeq}
\newcommand{\Coeff}[3]{\Gamma_{#1}^{#3} \( #2 \)}
\newcommand{\dotp}{\bullet}
\newcommand{\digit}[2]{ #1^{\(#2\)} }
\newcommand{\Basis}[1]{\mathscr{B}_{#1}}
\newcommand{\vone}{\bold{1}}
\renewcommand{\vee}{\bold{\operatorname{v}}}
\newcommand{\MPT}{Main Theorem}
\renewcommand{\ell}{l}
\newcommand{\gfpt}[1]{\bold{\operatorname{fpt}}\(#1\)}
\newcommand{\llam}{\bold{\lambda}}
\newcommand{\cc}{\bold{\operatorname{c}}}
\begin{document}
%\onehalfspace

\title{$F$-invariants of diagonal hypersurfaces}
\author{ Daniel Jes\'us Hern\'andez }
\thanks{The author was partially supported by the National Science Foundation RTG grant number 0502170 at the University of Michigan.}

\begin{abstract}  In this note, we derive a formula for the $F$-pure threshold of diagonal hypersurfaces over a perfect field  of prime characteristic.  We also calculate the associated test ideal at the $F$-pure threshold, and give formulas for higher jumping numbers of Fermat hypersurfaces.
\end{abstract}

\maketitle
%\tableofcontents

\section*{Introduction}

Let $R$ be a polynomial ring over a perfect field $\mathbb{L}$ of characteristic $p>0$, and consider a polynomial $f \in R$.  Using the \emph{Frobenius} morphism $R \to R$ given by $r \mapsto r^p$, one may define a family of ideals $\set{ \test{R}{f}{\lambda} \subseteq R : \lambda > 0}$ called the \emph{test ideals} of $f$.  Test ideals (defined in the context of \emph{tight closure}) were originally introduced in \cite{HH1990}, and generalized to pairs in \cite{HY2003}.  Test ideals vary with respect to $\lambda$ in the following way:  they shrink as $\lambda$ increases, and are also stable to the right.  We say that a parameter $\lambda$ is an \emph{$F$-jumping number} of $f$ if $\test{R}{f}{\lambda} \neq \test{R}{f}{\( \lambda - \error\)}$ for every $0 < \error < \lambda$.  We call the smallest $F$-jumping number the \emph{$F$-pure threshold} of $f$ and denote it by $\gfpt{f}$.  In this article, we consider these invariants when $f$ is \emph{diagonal} or \emph{Fermat}.  Recall that $f$ is called {diagonal} if it is an $\mathbb{L}^{\ast}$-linear combination of $x_1^{d_1}, \cdots, x_n^{d_n}$ and \emph{Fermat} if it is an $\mathbb{L}^{\ast}$-linear combination of $x_1^d, \cdots, x_d^d$.

In Theorem \ref{DiagonalFPT: T}, we give a formula for the $F$-pure threshold of a diagonal hypersurface as a function of the characteristic.  In Theorem \ref{DiagonalTestIdeal: T}, we give a formula for the first non-trivial test ideal of a diagonal hypersurface.  Note that (classical) test ideals of diagonal hypersurfaces were computed by McDermott in \cite{McDermott1} and \cite{McDermott2}.  In Theorem \ref{JumpingNumbers: T}, we give conditions for the existence of, and formulas for, higher jumping numbers of Fermat hypersurfaces.  For a detailed discussion of our main results, and for examples, see Section \ref{Discussion: S}.

\subsection{Acknowledgements} This work is part of the author's Ph.D. thesis at the University of Michigan.  I would like to thank Karen Smith for suggesting this problem, as well as Emily Witt,  whose observation led to the statement and proof of Theorem \ref{DiagonalFPT: T}.

\section{Test Ideals and $F$-pure thresholds}
\label{TI: S}

Let $\mathbb{L}$ be a a perfect of characteristic $p>0$, and let $R=\mathbb{L}[x_1, \cdots, x_n]$.  We will use $\m$ to denote the ideal $\( x_1, \cdots, x_n\)$.  As $\mathbb{L}$ is perfect, we have that $R^{p^e} := \mathbb{L}[x_1^{p^e}, \cdots, x_n^{p^e}]$ is the subring of $\(p^e\)^{\th}$ powers of $R$.  For every ideal $I \subseteq R$, let $\bracket{I}{e}$ denote the ideal generated by the set $\set{ g^{p^e} : g \in I}$.  We call $\bracket{I}{e}$ the $e^{\th}$ \emph{Frobenius power} of $I$.  %Note that $ \bracket{ \( \bracket{I}{e} \)}{d} = \bracket{I}{e+d}$ by definition.  

\begin{Definition}
\label{Basis: D}
%\eqref{Basis: e} %This used to be contained in this before.
We will use $\Basis{e}$ to denote the set of monomials $\set{ \mu : \mu \notin \bracket{\m}{e}}$. The reader may verify that 
$\Basis{e}$ is a free basis for $R$ as an $R^{p^e}$-module.  If $f \in R$ is a non-zero polynomial and $\mu \in \Basis{e}$, we use $\Coeff{\mu}{f}{e}$ to denote the element of $R$ such that $f = \sum_{\mu \in \Basis{e}} \Coeff{\mu}{f}{e}^{p^e} \mu$.
\end{Definition}

\begin{Remark}
\label{Flatness: R}
As $R$ is finitely generated and free over $R^{p^e}$, it follows that $f^{p^e} \in \bracket{I}{e}$ if and only if $f \in I$.
\end{Remark}

%\begin{equation} \label{basis: e} \set{ \sigma^{1/p^e} \cdot \mu^{1/p^e} : \mu \text{ monomial }, \mu \notin\bracket{\m}{e},  \sigma \in \Sigma^e}, \end{equation} a free basis for $\R{e}$ as a $R$-module, so that $\R{e}$ is a finitely generated, free $R$-module.

%\begin{Definition}  
%\label{Support: D}
%If $f = \sum_{I} \bold{u}_{I} \cdot x^{I} \in R$, let $\Support(f):= \{ \x^{I} : \bold{u}_{I} \neq 0 \}$.  
%\end{Definition}

\begin{Definition} %\cite{BMS2008} 
\label{Pideal: D}
Let $f \in R$ be a non-zero polynomial.  We use $\pideal{f}{}{e}$ to denote the ideal generated by the set $\set{\Coeff{\mu}{f}{e} : \mu \in \Basis{e} }$.
\end{Definition}

%\begin{Remark}
%\label{PidealRemark}
%The set $\pideal{f}{}{e}$ is closed under addition, and under multiplication by elements of $R$.  Thus, $%\pideal{f}{}{e}$ is an ideal of $R$.
%\end{Remark}

%\begin{Remark}
%\label{PidealBasis: R}
%Fix $e \geq 1$, and write $f =  a_1^{p^e} \cdot \beta_1 + \cdots + a_N^{p^e} \cdot \beta_N$, where $\set{ \beta_1, \cdots, \beta_N}$ is some free basis for $R$ over $R^{p^e}$.  Then, $f^{1/p^e} = a_1 \cdot \beta_1^{1/p^e} + \cdots + a_N \cdot \beta^{1/p^e}$ is the unique representation of $f^{1/p^e}$ with respect to the free basis $\set{ \beta_1^{1/p^e}, \cdots \beta_N^{1/p^e}}$ for $\R{e}$ over $R$.   As the $R$-module $\hom{e}$ is free (with a free basis given by the dual maps corresponding to the $\ \beta_i^{1/p^e}$), we see that \[ \pideal{f}{}{e} = \(a_1, \cdots, a_N \) \subseteq R. \] 
%\end{Remark}

Lemma \ref{LittleL: L} follows from \cite[Proposition 2.5]{BMS2008}, though we include a proof for the sake of completeness.

\begin{Lemma} 
\label{LittleL: L}
Let $f \in R$.  If $I \subseteq R$ is an ideal, then $\pideal{f}{}{e} \subseteq I$ if and only if $f \in \bracket{I}{e}$. 
\end{Lemma}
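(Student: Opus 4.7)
The plan is to use that every element of $R$ admits a unique expansion $g = \sum_{\mu \in \Basis{e}} c_\mu^{p^e} \mu$ with $c_\mu \in R$, because $\Basis{e}$ is a free basis of $R$ over $R^{p^e}$. This makes the coefficients $\Coeff{\mu}{f}{e}$ well-defined, and the two implications both reduce to comparing such expansions.

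For the direction $(\Leftarrow)$, I would start from a representation $f = \sum_{i} g_i^{p^e} h_i$ with $g_i \in I$ that is guaranteed by $f \in \bracket{I}{e}$. Expanding each $h_i = \sum_{\mu} c_{i,\mu}^{p^e} \mu$ in the basis $\Basis{e}$ and using that $(-)^{p^e}$ is additive in characteristic $p$, one obtains
\[
f = \sum_{\mu \in \Basis{e}} \Bigl(\sum_{i} g_i\, c_{i,\mu}\Bigr)^{p^e} \mu.
\]
By the uniqueness of expansions in the free basis $\Basis{e}$, this forces $\Coeff{\mu}{f}{e} = \sum_i g_i c_{i,\mu}$, which lies in $I$ because each $g_i$ does. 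Hence $\pideal{f}{}{e} \subseteq I$.

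For the direction $(\Rightarrow)$, the argument is even more direct. Assuming $\pideal{f}{}{e} \subseteq I$, every coefficient $\Coeff{\mu}{f}{e}$ lies in $I$, so $\Coeff{\mu}{f}{e}^{p^e} \in \bracket{I}{e}$. Substituting into the tautological expansion $f = \sum_{\mu \in \Basis{e}} \Coeff{\mu}{f}{e}^{p^e}\, \mu$ exhibits $f$ as an element of $\bracket{I}{e}$.

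The only subtle point is the uniqueness/comparison of coefficients in the forward direction, which rests on the fact that $\Basis{e}$ is a \emph{free} basis of $R$ over $R^{p^e}$ (noted in Definition \ref{Basis: D}); once this is invoked, the proof is immediate. I do not anticipate a real obstacle, as both implications collapse to matching basis coefficients and using the Frobenius additivity of $R$.
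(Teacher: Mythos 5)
Your proof is correct and follows essentially the same route as the paper's: the $(\Leftarrow)$ direction expands the $R^{p^e}$-linear combination representing $f$ in the free basis $\Basis{e}$ and reads off the coefficients via uniqueness, while the $(\Rightarrow)$ direction is the tautological observation that $f \in \bracket{\pideal{f}{}{e}}{e} \subseteq \bracket{I}{e}$. The only cosmetic difference is that the paper fixes a finite generating set $a_1,\dots,a_s$ of $I$ where you allow arbitrary $g_i \in I$; the argument is otherwise identical.
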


\begin{proof} If $\pideal{f}{}{e} \subseteq I$, then $f \in \bracket{\( \pideal{f}{}{e} \)}{e} \subseteq \bracket{I}{e}$.  Instead, suppose  $f \in \bracket{I}{e} = (a_1^{p^e}, \cdots, a_s^{p^e})$.  Then, $f = \sum_{i=1}^s g_i \cdot a_i^{p^e} =  \sum_{\mu \in \Basis{e}} \( \sum_{i=1}^s a_i \Coeff{\mu}{g_i}{e} \)^{p^e} \mu$.  Thus, $\Coeff{\mu}{f}{e} = \sum_{i=1}^s a_i \Coeff{\mu}{g_i}{e}$, and we conclude that $\pideal{f}{}{e} \subseteq I$. 
\end{proof}

\begin{Remark}
Lemma \ref{LittleL: L} shows that $\pideal{f}{}{e}$ is the unique minimal ideal $I$ such that $f \in \bracket{I}{e}$.  This shows that $\pideal{f}{}{e}$ does not depend on the specific choice of basis $\Basis{e}$ for $R$ over $R^{p^e}$.
\end{Remark}

\begin{Definition}
\label{TestIdealDefinition} For every $\lambda \geq 0$, the set $\set{ \pideal{f}{\up{p^e \lambda}}{e} : e \geq 1}$ defines an increasing sequence of ideals \cite[Lemma 2.8]{BMS2008}.  We call the stabilizing ideal the \emph{test ideal} of $f$ (with respect to the parameter $\lambda$), and denote it by $\test{R}{f}{\lambda}$.  In other words, \[ \test{R}{f}{\lambda} =  \bigcup_{e \geq 1}  \pideal{f}{\up{ p^e \lambda }}{e} = \pideal{f}{\up{p^e \lambda}}{e} \text{ for all } e \gg 0. \]
\end{Definition}

The following lemma, whose proof we omit, allows us to identify when the test ideal stabilizes in an important special case.

\begin{Lemma}\cite[Lemma 2.1]{BMS2009}
\label{Stabilization: L}
If $\lambda \in \frac{1}{p^e} \cdot \mathbb{N}$, then $\test{R}{f}{\lambda} = \pideal{f}{p^e \lambda}{e}$.
\end{Lemma}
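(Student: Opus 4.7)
The plan is straightforward. Set $k := p^e \lambda$, a non-negative integer by hypothesis; the lemma then asks us to identify $\test{R}{f}{\lambda}$ with $\pideal{f}{k}{e}$. Since the test ideal is the stable value of the increasing sequence $\{\pideal{f}{\up{p^{e'}\lambda}}{e'}\}_{e' \geq 1}$, and since $\up{p^{e'}\lambda} = p^{e'-e}k$ (no rounding is required) for every $e' \geq e$, it suffices to prove the equality $\pideal{f}{p^{e'-e}k}{e'} = \pideal{f}{k}{e}$ for all $e' \geq e$.

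One direction is free: the monotonicity already cited in Definition~\ref{TestIdealDefinition} gives $\pideal{f}{k}{e} \subseteq \pideal{f}{p^{e'-e}k}{e'}$. For the reverse inclusion I would invoke Lemma~\ref{LittleL: L}, which reduces the claim to showing
\[
f^{p^{e'-e}k} \in \bracket{\pideal{f}{k}{e}}{e'}.
\]
The tautological expansion $f^k = \sum_{\mu \in \Basis{e}} \Coeff{\mu}{f^k}{e}^{p^e}\, \mu$ from Definition~\ref{Pideal: D} already places $f^k$ in $\bracket{\pideal{f}{k}{e}}{e}$. Raising both sides to the $p^{e'-e}$th power and distributing Frobenius across the sum (this is the one place we use characteristic $p$) produces
\[
(f^k)^{p^{e'-e}} \;=\; \sum_{\mu \in \Basis{e}} \Coeff{\mu}{f^k}{e}^{p^{e'}} \mu^{p^{e'-e}},
\]
each summand of which lies in $\bracket{\pideal{f}{k}{e}}{e'}$, because the generators $\Coeff{\mu}{f^k}{e}$ of $\pideal{f}{k}{e}$ appear raised to the $p^{e'}$th power. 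This gives the reverse inclusion and hence the desired equality.

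I do not anticipate genuine obstacles here: the argument is a formal manipulation built from Lemma~\ref{LittleL: L} and the composition law $(\bracket{I}{e})^{[p^{e'-e}]} = \bracket{I}{e'}$ for Frobenius powers. The conceptually clarifying point worth recording in the write-up is that the defining sequence stabilizes already at $e' = e$ precisely because $p^e \lambda$ is an integer, so no further refinement of the denominator can enlarge $\pideal{f}{k}{e}$.
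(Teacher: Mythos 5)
Your proof is correct. The paper itself omits a proof of this lemma (deferring to \cite{BMS2009}), so there is no in-text argument to compare against, but your reasoning is sound: you reduce the claim to the stabilization $\pideal{f}{p^{e'-e}k}{e'} = \pideal{f}{k}{e}$ for all $e' \geq e$, obtain one inclusion from the monotonicity already recorded in Definition~\ref{TestIdealDefinition}, and obtain the other by raising the $\Basis{e}$-expansion of $f^k$ to the $p^{e'-e}$th power, distributing Frobenius, observing that each summand $\Coeff{\mu}{f^k}{e}^{p^{e'}}\mu^{p^{e'-e}}$ lies in $\bracket{\pideal{f}{k}{e}}{e'}$, and closing with Lemma~\ref{LittleL: L}. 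The only minor bookkeeping point is that the tautological expansion you invoke is the content of Definition~\ref{Basis: D} (which introduces $\Coeff{\mu}{\,\cdot\,}{e}$), while Definition~\ref{Pideal: D} merely packages those coefficients into the ideal $\pideal{f}{}{e}$.
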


Test ideals form a decreasing sequence of ideals, and are stable to the right \cite[Proposition 2.11, Corollary 2.16]{BMS2008} .  That is, $\test{R}{f}{\lambda} \subseteq \test{R}{f}{\lambda_{\circ}}$ if $\lambda \geq \lambda_{\circ}$.  Additionally, for every $\lambda \geq 0$ there exists $\error > 0$ such that $\test{R}{f}{\lambda} = \test{R}{f}{\(\lambda+\delta\)}$ whenever $0 \leq \delta < \error$.  This behavior motivates the following definition.

\begin{Definition}  We say that $\lambda>0$ is an \emph{$F$-jumping number} of $f$ if \[ \test{R}{f}{\lambda} \neq \test{R}{f}{\( \lambda - \error \)} \text{ for all } 0 < \error <  \lambda.\]  
\noindent By convention, we consider  $0$ an $F$-jumping number of $f$. 
\end{Definition}

\begin{Proposition} \cite[Proposition 2.25]{BMS2008}
\label{JumpingNumbersinInterval: P}
A number $\gamma > 1$ is an $F$-jumping number of $f$ if and only if $\gamma - 1$ is an $F$-jumping number of $f$.  \end{Proposition}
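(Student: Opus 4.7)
The plan is to establish a principal-ideal Skoda-type identity
$$\test{R}{f}{\lambda} = f \cdot \test{R}{f}{\lambda - 1} \text{ for all } \lambda \geq 1,$$
and then exploit the fact that $R$ is a domain (so multiplication by nonzero $f$ is injective on ideals) to transfer the jumping behavior from $\gamma$ to $\gamma - 1$.

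The first step is to show $\pideal{f^{p^e} g}{}{e} = f \cdot \pideal{g}{}{e}$ for any nonzero $g \in R$.  This follows immediately from Definition \ref{Pideal: D} and the uniqueness of basis expansion:  if $g = \sum_{\mu \in \Basis{e}} \Coeff{\mu}{g}{e}^{p^e} \mu$, then multiplying by $f^{p^e}$ yields $f^{p^e} g = \sum_{\mu \in \Basis{e}} \(f \cdot \Coeff{\mu}{g}{e}\)^{p^e} \mu$, so $\Coeff{\mu}{f^{p^e} g}{e} = f \cdot \Coeff{\mu}{g}{e}$ and the two ideals coincide.  For the Skoda identity itself, observe that since $p^e$ is an integer, $\up{p^e \lambda} = p^e + \up{p^e (\lambda - 1)}$ for every $\lambda \geq 1$; hence $f^{\up{p^e \lambda}} = f^{p^e} \cdot f^{\up{p^e(\lambda - 1)}}$, and applying the auxiliary identity with $g = f^{\up{p^e(\lambda - 1)}}$, then passing to $e \gg 0$ via Definition \ref{TestIdealDefinition}, produces the desired formula.

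Now assume $\gamma > 1$.  By the monotonicity and right-stability of test ideals recalled before Definition \ref{Pideal: D}, $\gamma$ is an $F$-jumping number of $f$ if and only if $\test{R}{f}{\gamma} \subsetneq \test{R}{f}{\(\gamma - \error\)}$ for every sufficiently small $\error > 0$, and analogously for $\gamma - 1$.  For $0 < \error < \gamma - 1$ small, both $\gamma$ and $\gamma - \error$ exceed $1$, so the Skoda identity gives $\test{R}{f}{\gamma} = f \cdot \test{R}{f}{\gamma - 1}$ and $\test{R}{f}{\(\gamma - \error\)} = f \cdot \test{R}{f}{\(\gamma - 1 - \error\)}$.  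Since $R$ is a domain and $f$ is nonzero, $f \cdot I = f \cdot J$ forces $I = J$, and the two jumping conditions become equivalent.  The main---and essentially only---technical point is the auxiliary identity at the start of Step 2, which reduces to uniqueness of basis expansions; everything else is bookkeeping around the ceiling and stabilization index.
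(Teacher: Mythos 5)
Your proof is correct. The paper does not prove this statement itself but simply cites \cite[Proposition~2.25]{BMS2008}, and the Skoda-type identity $\test{R}{f}{\lambda} = f \cdot \test{R}{f}{(\lambda - 1)}$ for $\lambda \geq 1$ -- derived exactly as you do, from $\up{p^e\lambda} = p^e + \up{p^e(\lambda-1)}$ and $\Coeff{\mu}{f^{p^e}g}{e} = f\cdot\Coeff{\mu}{g}{e}$ -- is precisely the argument used there in the principal-ideal case.
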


%\begin{proof}
%This is an easy corollary of \cite[Proposition 2.25]{BMS2008}.
%\end{proof}

Let $f$ be a non-zero, non-unit polynomial in $R$, and choose $e \gg 0$ so that $p^e > \deg f$.  It follows that for every proper ideal $I \subsetneq R$,  we have that $f \notin \bracket{I}{e}$.  This, combined with Lemma \ref{LittleL: L} and Lemma \ref{Stabilization: L}, shows that $\pideal{f}{}{e} = \test{R}{f}{\frac{1}{p^e}}$ is not contained in any proper ideal of $R$, and thus must equal $R$.  We see that $\test{R}{f}{\lambda} = R$ for $0 < \lambda \ll 1$, and so the smallest non-zero $F$-jumping number of $f$ is the minimal parameter $\lambda$ such that $\test{R}{f}{\lambda} \neq R$.  This jumping number is of particular interest, and is called the $F$-pure threshold of $f$. 

\begin{Definition}
\label{FPTDefinition} We call $\gfpt{f}: = \sup \set{ \lambda \in \mathbb{R}_{\geq 0} : \test{R}{f}{\lambda} = R }$ the \emph{$F$-pure threshold} of $f$, and we call $\fpt{f} :=   \sup \set{ \lambda \in \mathbb{R}_{\geq 0} : \test{R}{f}{\lambda}_{\m} = R_{\m} } $ the $F$-pure threshold of $f$ at $\m$.
\end{Definition}

In our computations, we will use the following well known description of $\fpt{f}$.

\begin{Lemma}
\label{TypicalFPT: L}
$\fpt{f} = \max \set{ \lambda >  0 : \exists \ e_{\lambda} \geq 1\text{ with } f^{\up{p^e \lambda}} \notin \bracket{\m}{e} \text{ for all } e \geq e_{\lambda}}$.
\end{Lemma}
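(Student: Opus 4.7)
My plan is to rewrite the condition defining the set on the right-hand side using Lemma \ref{LittleL: L} and the structure of the test ideal as an increasing union, and then to match it with the condition appearing in Definition \ref{FPTDefinition}. The argument should be short, as all the real work is already packaged into the two lemmas.

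First, I would use Definition \ref{TestIdealDefinition} to express the test ideal as the increasing union $\test{R}{f}{\lambda} = \bigcup_{e \geq 1} \pideal{f}{\up{p^e \lambda}}{e}$. Localizing at $\m$, we have $\test{R}{f}{\lambda}_{\m} = R_{\m}$ if and only if $\test{R}{f}{\lambda} \not\subseteq \m$, and using the fact that the chain is \emph{increasing}, this is equivalent to the existence of some $e_{\lambda} \geq 1$ for which $\pideal{f}{\up{p^e \lambda}}{e} \not\subseteq \m$ for every $e \geq e_{\lambda}$ (as opposed to merely for a single $e$). This passage from ``some $e$'' to ``all $e \geq e_{\lambda}$'' is the main technical point I want to flag: it is not hard, but it is what allows the right-hand side of the lemma to be stated cleanly.

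Next, I would invoke Lemma \ref{LittleL: L} with $I = \m$, which gives $\pideal{f}{\up{p^e \lambda}}{e} \not\subseteq \m$ if and only if $f^{\up{p^e \lambda}} \notin \bracket{\m}{e}$. Chaining the two equivalences, the set $\{ \lambda > 0 : \test{R}{f}{\lambda}_{\m} = R_{\m} \}$ agrees with the set appearing on the right-hand side of the lemma. By Definition \ref{FPTDefinition}, the supremum of the former is $\fpt{f}$, which finishes the argument.

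The only part of the statement that needs a small comment is the use of ``$\max$'' rather than ``$\sup$''. Because test ideals are right-stable, the set in question is actually of the form $(0, \fpt{f})$ and so its supremum is $\fpt{f}$ but is not attained; ``$\max$'' here should therefore be read as ``supremum'', consistent with the way it is used in the definition of $\fpt{f}$. I do not anticipate any obstacle beyond this bookkeeping.
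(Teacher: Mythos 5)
Your argument is correct and follows essentially the same route as the paper: both reduce the claim to Lemma \ref{LittleL: L} applied with $I=\m$, relying on the description of $\test{R}{f}{\lambda}$ from Definition \ref{TestIdealDefinition}. The only cosmetic difference is that the paper invokes the stabilization $\test{R}{f}{\lambda}=\pideal{f}{\up{p^e\lambda}}{e}$ for $e\gg 0$ directly, whereas you argue via the increasing chain; these are equivalent, and your remark that ``$\max$'' should be read as ``$\sup$'' is an accurate observation about the statement.
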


\begin{proof}
Comparing with Definition \ref{FPTDefinition}, we see  it suffices to show $\test{R}{f}{\lambda}_{\m} = R_{\m}$ if and only if there exists $e_{\lambda} \geq 1$ with $f^{\up{p^e}} \notin \bracket{\m}{e}$ for all $e \geq e_{\lambda}$.  By definition, there exists $e_{\lambda}$ such that $\test{R}{f}{\lambda} = \pideal{f}{\up{p^e \lambda}}{e}$ for all $e \geq e_{\lambda}$.  For such an $e$, $\test{R}{f}{\lambda}_{\m} = R_{\m}$ if and only if $\pideal{f}{\up{p^e \lambda}}{e}_{\m} = R_{\m}$.  However, this occurs if and only if $\pideal{f}{\up{p^e \lambda}}{e} \not \subseteq \m$, which Lemma \ref{LittleL: L} shows happens if and only if $f^{\up{p^e \lambda}} \notin \bracket{\m}{e}$.
\end{proof}

\section{Some remarks on base $p$ expansions}
\label{Expansions: SS}

\begin{Definition}
 Let $\alpha \in (0,1]$, and let $p$ be a prime number.  Let $\digit{\alpha}{d}$ be the unique integer in $[0,p-1]$ such that $\alpha = \sum_{d \geq 1}\frac{\digit{\alpha}{d}}{p^d}$ and such that $\digit{\alpha}{d} \neq 0$ is not eventually zero as a function of $d$.  We call $\digit{\alpha}{d}$ the $d^{\th}$ \emph{digit} of the non-terminating base $p$ expansion of $\alpha$.  We adopt the convention that $\digit{\alpha}{0} = \digit{0}{d} = 0$.  
\end{Definition}

\begin{Example}
If $\alpha = \frac{1}{p} = \frac{0}{p} + \sum_{e \geq 2}$, we see that $\digit{\alpha}{1} = 0$ and $\digit{\alpha}{e} = p-1$ for all $e \geq 1$.
\end{Example}

%By Remark \ref{FPTRational: R}, we have that $\fpt{f} \in (0,1]$, and we will soon see that the digits appearing in the non-terminating base $p$ expansion of the (rational) number $\fpt{f}$ encode key information describing the singularities of the polynomial $f$.  See Lemma \ref{FTTruncationLemma} for the precise meaning of this statement.

\begin{Definition}
\label{TruncationDefinition}
If $\lambda \neq 0$, we call $\tr{\lambda}{e} : = \sum_{d=1}^e \frac{\digit{\lambda}{d}}{p^d}$ the $e^{\th}$ \emph{truncation} of $\lambda$ (in base $p$).  
\end{Definition}

\begin{Lemma}
\label{Truncation: L} If $\lambda \in [0,1]$, then $\up{p^e \lambda} = p^e \tr{\lambda}{e} + 1$. Furthermore, if $\alpha \in [0,1] \cap \frac{1}{p^e} \cdot \mathbb{N}$ and $\lambda > \alpha$, then $\tr{\lambda}{e} \geq \alpha$.  \end{Lemma}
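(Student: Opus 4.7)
The plan is to handle the two parts in sequence, with the first part providing the main computational input for the second.

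For the first claim, my approach is to use the non-terminating base $p$ expansion directly. Writing $\lambda = \tr{\lambda}{e} + \sum_{d > e} \digit{\lambda}{d}/p^d$ and multiplying by $p^e$, I would express
\[
p^e \lambda = p^e \tr{\lambda}{e} + \tau, \quad \text{where } \tau := \sum_{k \geq 1} \frac{\digit{\lambda}{e+k}}{p^k}.
\]
The point is that $p^e \tr{\lambda}{e} = \sum_{d=1}^{e} \digit{\lambda}{d} p^{e-d}$ is a non-negative integer, so the ceiling of $p^e \lambda$ will be $p^e \tr{\lambda}{e} + \up{\tau}$. Hence the task reduces to showing $\up{\tau} = 1$, i.e.\ that $\tau \in (0,1]$. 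The upper bound is the geometric sum $\sum_{k \geq 1}(p-1)/p^k = 1$, and strict positivity is exactly the content of the non-terminating convention built into the definition of the digits $\digit{\lambda}{d}$ (they are not eventually zero as soon as $\lambda \neq 0$). One should note that the formula fails at $\lambda = 0$, so implicitly $\lambda \in (0,1]$ here; this is the standing assumption throughout the paper.

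For the second claim, the strategy is to feed Part I into a direct ceiling estimate. Since $\alpha \in [0,1] \cap \tfrac{1}{p^e}\mathbb{N}$, write $\alpha = m/p^e$ with $m \in \mathbb{Z}_{\geq 0}$, so that $p^e \alpha = m$ is an integer. From $\lambda > \alpha$ we get $p^e \lambda > m$, which forces $\up{p^e \lambda} \geq m + 1$. Combining this with Part I gives $p^e \tr{\lambda}{e} + 1 \geq m + 1$, and dividing by $p^e$ yields $\tr{\lambda}{e} \geq \alpha$. The trivial edge case $\alpha = 0$ is handled separately (and automatically) since truncations are always non-negative.

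The main obstacle — which is more of a small subtlety than a real difficulty — is the boundary behavior at $\tau = 1$, which can occur precisely when $\lambda$ has a terminating base $p$ expansion (so its non-terminating form ends in repeating $(p-1)$'s). In that case $p^e\lambda$ is itself an integer and the equality $\up{p^e\lambda} = p^e\tr{\lambda}{e} + 1$ still holds, but one must check this carefully rather than asserting $\tau \in (0,1)$. Once that case is correctly incorporated, everything else is a straightforward unpacking of the definitions.
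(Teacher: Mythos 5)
Your proof is correct and follows essentially the same route as the paper: decompose $p^e\lambda$ as the integer $p^e\tr{\lambda}{e}$ plus a tail $\tau$, observe $0<\tau\leq 1$ (the lower bound using the non-terminating convention, the upper bound being the geometric series), and then reduce Part II to an integrality argument; the paper phrases Part II via the bound $\lambda\leq\tr{\lambda}{e}+p^{-e}$ rather than via $\up{p^e\lambda}$, but the content is identical. Your remark that the first claim fails at $\lambda=0$ (since $\digit{0}{d}=0$ by the paper's convention, giving $\up{0}=0\neq 1$) is a fair observation about the statement, and your $\tau=1$ "subtlety" is in fact already absorbed by writing $\tau\in(0,1]$ and $\up{\tau}=1$, so there is no gap.
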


\begin{proof}  As $p^e \lambda = p^e \tr{\lambda}{e} + p^e \cdot \sum_{d > e} \frac{\digit{\lambda}{d}}{p^d}$, the first claim follows from the observation that $0 <  \sum_{d >e} \frac{\digit{\lambda}{d}}{p^d} \leq \frac{1}{p^e}$.  We also see that $\frac{1}{p^e} + \tr{\lambda}{e} \geq \lambda > \alpha$, so \begin{equation} \label{rounding: e} 1 + p^e \tr{\lambda}{e} > p^e \alpha.\end{equation}  By hypothesis, both sides of \eqref{rounding: e} are integers, and we conclude that $p^e \tr{\lambda}{e} \geq p^e \alpha$.
\end{proof}

%The following Lemma characterizes the truncations of the non-terminating base $p$ expansions of $F$-pure thresholds, and will be key to our future computations.

%\begin{Lemma}%[\cite{MTW2005}]
%\label{FTTruncationLemma}  If $f$ is a polynomial over a field $K$ of characteristic $p>0$ with $[K:K^p]< \infty$, then $ p^e \tr{\fpt{f}}{e} = \max \set{ a \in \mathbb{N} : f^a \notin \bracket{\m}{e} }$ for every $e \geq 0$.  \bluenote{unnecessary!}
%\end{Lemma}

%\begin{proof}
%This is a restatement of \cite[Proposition 1.9]{MTW2005}.  
%\end{proof}

\begin{Definition}
\label{CarryingDefinition}
Let $(\lambda_1, \cdots, \lambda_n) \in [0,1]^n$, and let $p$ be a prime number.
We say the $e^{\th}$ digits of  $\lambda_1, \cdots, \lambda_n$ \emph{add without carrying} (in base $p$) if $\digit{\lambda_1}{e} + \cdots + \digit{\lambda_n}{e} \leq p-1$, and we say that $\lambda_1, \cdots, \lambda_n$ add without carrying if all of their digits add without carrying.  We say natural numbers $k_1, \cdots, k_n$ add without carrying (in base $p$) if the obvious condition holds.
\end{Definition}

%\begin{Remark}
%\label{CarryingRemark}  $\lambda_1, \cdots, \lambda_n$ add without carrying if and only if the base $p$ expansions of the integers $p^e \tr{\lambda_1}{e}, \cdots, p^e \tr{\lambda_n}{e}$ add without carrying for all $e \geq 1$.
%\end{Remark}

\begin{Remark}
\label{AddingDigits: R}
If $\lambda_1, \cdots, \lambda_n$ add without carrying (in base $p$) and $\lambda:= \sum_{i=1}^n \lambda_i \leq 1$, then $\digit{\lambda}{e} = \digit{\lambda_1}{e} + \cdots + \digit{\lambda_n}{e}$ for all $e \geq 1$.
\end{Remark}

The notion of adding without carrying is relevant in light of the following classical result.

\begin{Lemma}\cite{Dickson, Lucas} 
\label{Lucas: L}  
 Let $\k = (k_1, \cdots k_n) \in \mathbb{N}^n$ and set $N= | \k | = \sum k_i$.  Then $\binom{N}{\k} := \frac{ N! }{k_1 ! \cdots k_n!} \not \equiv 0 \mod p  $ if and only if  $k_1, \cdots, k_n$ add without carrying (in base $p$).
\end{Lemma}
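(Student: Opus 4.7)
The plan is to reduce the problem to \emph{Legendre's formula} for the $p$-adic valuation of factorials: for any $m \in \mathbb{N}$ one has $v_p(m!) = (m - s_p(m))/(p-1)$, where $s_p(m) := \sum_{d \geq 0} m^{(d)}$ is the sum of the base-$p$ digits of $m$. This is a classical identity (it follows from counting, for each $d \geq 1$, how many multiples of $p^d$ are at most $m$), and I would simply cite it.

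Applying Legendre's formula to $N!$ and to each $k_i!$, and using $N = \sum_i k_i$, I compute
\[ v_p\!\left( \binom{N}{\k} \right) = \frac{N - s_p(N)}{p-1} - \sum_{i=1}^n \frac{k_i - s_p(k_i)}{p-1} = \frac{ \sum_{i=1}^n s_p(k_i) - s_p(N) }{p-1}. \]
Therefore $\binom{N}{\k} \not\equiv 0 \pmod p$ if and only if $s_p(N) = \sum_{i=1}^n s_p(k_i)$, and the entire problem is reduced to showing that this digit-sum equality is equivalent to the no-carrying condition.

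To do this I would establish the more precise identity $s_p(a+b) = s_p(a) + s_p(b) - (p-1) \cdot c(a,b)$, where $c(a,b)$ counts the digit positions at which a carry occurs when adding $a$ to $b$ in base $p$. This follows directly from the mechanics of base-$p$ addition: at each position where a carry is produced, a partial sum of size $\geq p$ contributes a digit that is $p$ less than it would otherwise, while the carry contributes only $1$ to the next position, for a net loss of $p-1$ in the digit sum. Iterating this identity over the sum $N = k_1 + \cdots + k_n$, processing one summand at a time, shows that $\sum_i s_p(k_i) - s_p(N)$ equals $(p-1)$ times the total number of carries incurred, which vanishes precisely when $k_1, \ldots, k_n$ add without carrying.

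The main potential obstacle is simply the bookkeeping in the iterated addition; no deep step is involved. A clean self-contained alternative would be to first prove a multinomial analogue of Lucas' theorem,
\[ \binom{N}{\k} \equiv \prod_{d \geq 0} \binom{N^{(d)}}{k_1^{(d)}, \ldots, k_n^{(d)}} \pmod{p}, \]
and then observe that each factor on the right is a genuine integer multinomial coefficient (since each $k_i^{(d)} \in \{0, \ldots, p-1\}$) which is nonzero modulo $p$ if and only if $k_1^{(d)} + \cdots + k_n^{(d)} = N^{(d)}$, i.e. no carry occurs at digit position $d$. The Legendre approach above, however, seems the shortest.
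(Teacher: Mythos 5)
The paper cites this result to Dickson and Lucas without giving a proof, so there is no in-paper argument to compare against; you are on your own here, and your argument is correct. The Legendre-formula computation $v_p\bigl(\binom{N}{\k}\bigr) = \frac{1}{p-1}\bigl(\sum_{i} s_p(k_i) - s_p(N)\bigr)$ is the standard route (it is Kummer's carry theorem in multinomial form), and the identity $s_p(a+b) = s_p(a) + s_p(b) - (p-1)\,c(a,b)$, iterated over the summands, does identify exactly when the valuation vanishes. The one small point worth making explicit in the iteration is the converse direction: if no carry occurs at any stage of the successive additions $k_1$, then $+k_2$, then $+k_3$, and so on, then the digit of the running partial sum at each position $d$ is literally $k_1^{(d)} + \cdots + k_j^{(d)}$, so "no carries in the iterated sum" is genuinely equivalent to the paper's condition $\sum_i k_i^{(d)} \leq p-1$ for every $d$, and not just implied by it. (One should also note, or arrange the ordering so, that the total carry count is independent of the order in which the $k_i$ are processed; this is immediate once the loss is expressed as $\frac{1}{p-1}\bigl(\sum_i s_p(k_i) - s_p(N)\bigr)$.) Your alternative via the multinomial Lucas congruence $\binom{N}{\k} \equiv \prod_{d} \binom{N^{(d)}}{k_1^{(d)},\ldots,k_n^{(d)}} \pmod p$ is equally valid and is closer in spirit to the cited references, which state the congruence form directly.
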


\section{Discussion of the main results}
\label{Discussion: S}

\subsection{$F$-pure theshholds of diagonal hypersurfaces}
In our first result, we give a formula the $F$-pure threshold of a diagonal hypersurface.

\begin{Theorem}
\label{DiagonalFPT: T}
Let $L$ be the supremum over all $N$ such that the $e^{\th}$ digits of $\frac{1}{d_1}, \cdots, \frac{1}{d_n}$ add without carrying for all $0 \leq e \leq N$.  If $f$ is a $\mathbb{L}^{\ast}$-linear combination of $x_1^{d_1}, \cdots,  x_n^{d_n}$, then 
\[ \fpt{f} = \begin{cases} \ \frac{1}{d_1} + \cdots + \frac{1}{d_n} & \text{if } L = \infty  \vspace{.1in} \\   \trd{1}{L}+ \cdots + \trd{n}{L}+ \frac{1}{p^L} & \text{if } L < \infty \end{cases} \]
\end{Theorem}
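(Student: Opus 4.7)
My plan is to apply Lemma~\ref{TypicalFPT: L} and identify $\fpt{f}$ as the supremum of $\lambda > 0$ for which $f^{\up{p^e \lambda}} \notin \bracket{\m}{e}$ for all sufficiently large $e$. Expanding $f^N = \sum_{|\k| = N} \binom{N}{\k}\bigl(\prod_i c_i^{k_i}\bigr) \prod_i x_i^{k_i d_i}$ and using that these monomials are pairwise distinct, together with Lemma~\ref{Lucas: L}, I find that $f^N \notin \bracket{\m}{e}$ is equivalent to the existence of $\k \in \mathbb{N}^n$ with $|\k| = N$, $k_i d_i < p^e$ for every $i$, and the $k_i$'s adding without carrying in base $p$. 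Setting $\mu_i := k_i/p^e$, whose terminating base-$p$ expansion has digits $\mu_i^{(d)}$ at positions $d = 1, \ldots, e$, the task reduces to maximizing $\sum_i \mu_i$ over configurations satisfying $\mu_i < 1/d_i$ and $\sigma_d := \sum_i \mu_i^{(d)} \leq p-1$ for every $d$.

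For the upper bound, I compare each $\mu_i$ digit by digit against the non-terminating expansion of $1/d_i$: if $\mu_i^{(d')} = \digit{1/d_i}{d'}$ for all $d' < d$ (so that $\mu_i$ is ``on-track at position $d$''), then $\mu_i < 1/d_i$ forces $\mu_i^{(d)} \leq \digit{1/d_i}{d}$. I split into two cases. Case~A: every $\mu_i$ remains on-track through position $L$; then $\sigma_d = S_d := \sum_i \digit{1/d_i}{d}$ for $d = 1, \ldots, L$ and $\sigma_d \leq p-1$ for $d > L$, and summing the geometric tail yields $\sum_i \mu_i \leq \sum_i \trd{i}{L} + 1/p^L - 1/p^e$. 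Case~B: some index $i^{\ast}$ first drops below the matching digit at a position $d_0 \leq L$, which forces $\sigma_{d_0} \leq S_{d_0} - 1$; combined with $\sigma_d = S_d$ for $d < d_0$ and $\sigma_d \leq p-1$ for $d > d_0$, a short geometric calculation collapses the bound to $\sum_i \mu_i \leq \sum_{d=1}^{d_0} S_d/p^d - 1/p^e$, and since $d_0 \leq L$ this is still at most $\sum_i \trd{i}{L} + 1/p^L - 1/p^e$. This establishes the claimed upper bound on $\fpt{f}$; the $L = \infty$ case collapses to the immediate estimate $\sum_i \mu_i < \sum_i 1/d_i$.

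For the lower bound I construct matching configurations. When $L = \infty$, take $\mu_i := \trd{i}{e}$: the column sums are the $S_d$'s, all at most $p-1$, and $\sum_i \mu_i \to \sum_i 1/d_i$ as $e \to \infty$. When $L < \infty$ and $e > L$, fix $\mu_i^{(d)} := \digit{1/d_i}{d}$ for $d = 1, \ldots, L$ and distribute digits at each position $d = L+1, \ldots, e$ so that the column sums $\sigma_d$ equal exactly $p-1$; a direct geometric-series calculation then gives $\sum_i \mu_i = \sum_i \trd{i}{L} + 1/p^L - 1/p^e$, matching the upper bound. The subtlety is to arrange the column distributions so that each cap $\mu_i < 1/d_i$ holds, equivalently so that the overflow $\epsilon_i := \mu_i - \trd{i}{L}$ respects $\delta_i := 1/d_i - \trd{i}{L}$. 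The aggregate overflow is $\sum_i \epsilon_i = 1/p^L - 1/p^e$, and the crucial inequality $\sum_i \delta_i > 1/p^L$ follows from $S_{L+1} \geq p$ (which already gives $\sum_i \delta_i \geq 1/p^L$) strengthened to strict by the non-terminating convention, which guarantees that $S_d > 0$ for infinitely many $d > L+1$. This strict aggregate slack underwrites a position-by-position greedy distribution respecting every per-index cap.

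The main obstacle is this last step: even with the aggregate slack in hand, producing an explicit integer digit-assignment with the prescribed column sums that simultaneously respects every per-index cap $\delta_i$ requires careful bookkeeping. The remaining ingredients --- the multinomial reduction, Lucas' theorem, and the on-track/off-track dichotomy --- fall out cleanly once the non-terminating base-$p$ expansion convention is exploited systematically.
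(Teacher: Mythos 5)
Your reduction and upper bound are correct, and they take a genuinely different route from the paper. After reducing via Lemma~\ref{TypicalFPT: L}, expanding $f^N$ (noting the monomials $\x^{\Dee\k}$ are pairwise distinct so no cancellation occurs), and invoking Lemma~\ref{Lucas: L}, your reformulation as maximizing $\sum_i \mu_i$ with $\mu_i = k_i/p^e$ subject to $\mu_i < \delta_i$ and bounded column sums is sound. The on-track/off-track digit comparison then handles the upper bound in both the $L=\infty$ and $L<\infty$ cases in a unified, self-contained way. By contrast, the paper proves the two cases separately: Part~I's upper bound falls out of the crude estimate $|\k|<(p^e-1)|\ddelta|$, while Part~II's upper bound is proved by pulling back via Remark~\ref{Flatness: R} to level $L$ and deriving a contradiction from Lemma~\ref{Truncation: L}. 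Your version is more elementary, and it avoids entirely the appeal to \cite[\MPT]{Polynomials}, which the paper uses for the Part~II lower bound.

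The lower bound for $L<\infty$ is where your argument has a genuine gap, and to your credit you flag it yourself. Knowing that the aggregate slack $\sum_i(\delta_i - \tr{\delta_i}{L})$ strictly exceeds $1/p^L$ does not by itself produce a digit matrix with the required column sums $p-1$ while respecting every per-index cap --- those are two-dimensional constraints (column sums fixed, row totals bounded), and aggregate slack is necessary but not sufficient. Moreover, the aggregate-slack framing turns out to be a distraction: the construction does not need it. What it needs is only that $S_{L+1}\geq p$. Since $S_{L+1}\geq p > p-1$, you can pick nonnegative integers $a_1,\dots,a_n$ with $a_i\leq\digit{\delta_i}{L+1}$, $\sum a_i = p-1$, and $a_{i_0}<\digit{\delta_{i_0}}{L+1}$ for at least one index $i_0$. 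Now set $\mu_i^{(d)}:=\digit{\delta_i}{d}$ for $d\leq L$, $\mu_i^{(L+1)}:=a_i$, and for $L+2\leq d\leq e$ put $\mu_{i_0}^{(d)}:=p-1$ and $\mu_i^{(d)}:=0$ for $i\neq i_0$. All column sums from $L+1$ through $e$ equal $p-1$; for $i\neq i_0$ one has $\mu_i\leq\tr{\delta_i}{L+1}<\delta_i$; and $\mu_{i_0}<\tr{\delta_{i_0}}{L}+(a_{i_0}+1)/p^{L+1}\leq\tr{\delta_{i_0}}{L+1}<\delta_{i_0}$, the strict final inequality using the non-terminating convention. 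So one short-changed index absorbs every subsequent column, and the greedy bookkeeping you were worried about collapses to a one-line check. With that inserted, your proof is complete and entirely self-contained, which is a nice improvement over the paper's reliance on the external reference.
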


Formulas for the $F$-pure threshold of $x^2 + y^3$ and $x^2 +y^7$ are given in \cite
[Example $4.3$ and $4.4$]{MTW2005}.  At first glance, these formulas appear to be quite different from those in Theorem \ref{DiagonalFPT: T} above.   Below, we show how Theorem \ref{DiagonalFPT: T} may be used to recover these formulas.

\begin{Example}
\label{FPT: E}
We adopt decimal notation for base $p$ expansions. For example, if $a, b$ are integers with $0 \leq a,b \leq p-1$, then  $. \overline{ a \ b } \( \base p \)$ will denote the unique number $\lambda$ with the property that $\digit{\lambda}{e} = a$ for $e$ odd and $\digit{\lambda}{e} = b$ for $e$ even.  %We refer the reader to \cite[Chapter 9]{HW2008} for the standard algorithm on computing base $p$ expansions.  
Let $f$ be a $\mathbb{L}^{\ast}$-linear combination of $x^2$ and $y^3$.  If $p=3$, then
\[\frac{1}{2} = . \overline{1} \ (\base 3) \text{ and } \frac{1}{3} = . 1 = . 0 \ \overline{2} \ (\base 3). \]

We see that carrying is required to add the second digits of $\frac{1}{2}$ and $\frac{1}{3}$ (but not the first), and Theorem \ref{DiagonalFPT: T} implies $\fpt{f} = \tr{\frac{1}{2}}{1} + \tr{\frac{1}{3}}{1} + \frac{1}{3} = 0 + \frac{1}{3} + \frac{1}{3} = \frac{2}{3}$.  Similarly, one can show that $\fpt{f} = \frac{1}{2}$ if $p = 2$. If $p = 6 \emw + 1$ for some $\emw \geq 1$, then
\[ \frac{1}{2} = . \overline{3 \emw } \ (\base p) \text{ and } \frac{1}{3} = . \overline{2 \emw} \ (\base p). \]

We notice that $\frac{1}{2}$ and $\frac{1}{3}$ add without carrying (in base $p$), and Theorem \ref{DiagonalFPT: T} implies $\fpt{f} = \frac{1}{2}+ \frac{1}{3} = \frac{5}{6}$.  Finally, if $p = 6 \emw + 5$ for some $\emw \geq 0$, then 
\[ \frac{1}{2} = . \overline{3 \emw +2} \ (\base p) \text{ and } \frac{1}{3} = . \overline{ 2 \emw+1 \hspace{.1in}  \ 4 \emw+3} \ (\base p). \]
Once more, we see that carrying is needed to add the second digits of $\frac{1}{2}$ and $\frac{1}{3}$, (but not the first), and Theorem \ref{DiagonalFPT: T} implies \begin{equation} \label{fptex: e} \fpt{f} = \tr{\frac{1}{2}}{1} + \tr{\frac{1}{3}}{1} + \frac{1}{p} =  \frac{3 \emw +2}{p} + \frac{2 \emw + 1}{p} + \frac{1}{p} = \frac{5 \emw + 4}{p}.\end{equation}  The reader may verify that $\frac{5 \emw + 4}{p} + \frac{1}{6p} = \frac{5}{6}$, so we may rewrite \eqref{fptex: e} as $\fpt{f} = \frac{5}{6} - \frac{1}{6p}$.  Thus, we recover the following formula from \cite[Example 4.3]{MTW2005}:  \[ \fpt{x^2+y^3} = \begin{cases} 1/2 & \text{if } p=2 \\ 2/3 & \text{if } p=3 \\ 5/6 & \text{if } p \equiv 1 \bmod 6 \\ \frac{5}{6}-\frac{1}{6p} & \text{if } p \equiv 5 \bmod 6 \end{cases}.\] 
\end{Example}

%By Definition \ref{FPTDefinition},  $\test{R}{f}{\lambda} = R$ for $0 \leq \lambda < \fpt{f}$, while necessarily we have that $\test{R}{f}{\fpt{f}} \neq R$.  We now focus on computing $\test{R}{f}{\fpt{f}}$ for diagonal hypersurfaces $f$.  By Remark \ref{Local=Global: R}, the ideals $\test{R}{f}{\fpt{f}}$ and $\test{R}{f}{\fpt{f}}$ are equal whenever $p$ does not divide any of the exponents in $f$, and so Theorem \ref{DiagonalTestIdeal: T} below very often allows to to compute the first non-trivial test ideal.

\subsection{A computation of the first non-trivial test ideal}

Our second theorem computes the value of the test ideal at the $F$-pure threshold.

\begin{Theorem}
\label{DiagonalTestIdeal: T}  If $f$ is a $\mathbb{L}^{\ast}$-linear combination of $x_1^{d_1}, \cdots,  x_n^{d_n}$, then 
\[ \test{R}{f}{\fpt{f}} = \begin{cases} (f) & \text{if } \ \fpt{f}=1 \\
                                                             \m & \text{if } \ \fpt{f}= \frac{1}{d_1} + \cdots + \frac{1}{d_n} \\
                                                             \m & \text{if } \ \fpt{f} < \min \set{1, \sum_{i=1}^n \frac{1}{d_i}} \text{ and }  p > \max \set{d_1, \cdots, d_n}.
                                                             \end{cases} \]
\end{Theorem}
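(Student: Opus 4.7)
The plan is to handle the three cases separately.

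For case (1), with $\fpt{f}=1$, I would invoke the Skoda-type identity for principal ideals in regular rings: for sufficiently small $\delta>0$, $\test{R}{f}{1+\delta}=f\cdot\test{R}{f}{\delta}$. Right-continuity of test ideals gives $\test{R}{f}{1}=\test{R}{f}{1+\delta}$, while $\delta<1=\fpt{f}$ forces $\test{R}{f}{\delta}=R$, so $\test{R}{f}{1}=(f)$ as required.

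For cases (2) and (3), the inclusion $\test{R}{f}{\fpt{f}}\subseteq\m$ follows from the sup-definition of $\fpt{f}$ together with right-continuity (so $\test{R}{f}{\fpt{f}}_\m=\test{R}{f}{\fpt{f}+\delta}_\m\neq R_\m$ for small $\delta>0$, forcing $\test{R}{f}{\fpt{f}}\subseteq\m$). The content is the reverse containment $\m\subseteq\test{R}{f}{\fpt{f}}$, which I would establish by exhibiting each $x_j$ in the test ideal. Setting $m_i=p^e\tr{1/d_i}{e}$ and $N=1+\sum_i m_i$, I would take $e=L$ in case (3) (so Lemma~\ref{Stabilization: L} gives $\test{R}{f}{\fpt{f}}=\pideal{f}{N}{L}$) and $e\gg 0$ in case (2) chosen so that $\sum_i\digit{1/d_i}{e}\leq p-2$, which is possible because $\sum 1/d_i<1$ precludes all digit sums being $p-1$.

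The key computation is to expand $f^N=\sum_{|k|=N}\binom{N}{k}c^k x^{dk}$ by the multinomial theorem and isolate the multi-index $k^{(j)}:=(m_1,\ldots,m_{j-1},\,m_j+1,\,m_{j+1},\ldots,m_n)$. Since $d_i\tr{1/d_i}{e}<1\leq d_i\bigl(\tr{1/d_i}{e}+1/p^e\bigr)$, each $d_im_i$ equals $p^e-\epsilon_i$ for some $\epsilon_i\in[1,d_i]$, yielding $d_ik_i^{(j)}=p^e-\epsilon_i$ for $i\neq j$ and $d_jk_j^{(j)}=p^e+(d_j-\epsilon_j)$. This determines quotient vector $e_j$ and residue monomial $\mu=x_j^{d_j-\epsilon_j}\prod_{i\neq j}x_i^{p^e-\epsilon_i}\in\Basis{e}$. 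Lucas' Lemma~\ref{Lucas: L} together with the no-carrying hypothesis yields $\binom{N}{k^{(j)}}\not\equiv 0\pmod p$. Uniqueness of $k^{(j)}$ among multi-indices giving residue $\mu$ and summing to $N$ reduces, under $p\nmid d_i$, to observing that the congruences $d_ik'_i\equiv r_i\pmod{p^e}$ determine each $k'_i$ modulo $p^e$, so the constraint $|k'|=N$ forces $k'=k^{(j)}$. The hypothesis $p\nmid d_i$ is automatic in case (3) ($p>\max d_i$) and follows in case (2) from the no-carrying hypothesis outside trivial reducible configurations. It follows that $\Coeff{\mu}{f^N}{e}$ is a unit times $x_j$, so $x_j\in\pideal{f}{N}{e}=\test{R}{f}{\fpt{f}}$.

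The main obstacle arises in case (3) when $\sum_i\digit{1/d_i}{L}=p-1$: the digit sum of $k^{(j)}$ at position $0$ then equals $p$, violating Lucas. The strict inequality $\fpt{f}<1$ rules out the full-cascade scenario in which $\sum_i\digit{1/d_i}{d}=p-1$ for every $d\in\{1,\ldots,L\}$ (which would force $\fpt{f}=1$), so one can locate a position $t^*\leq L-1$ with $\sum_i\digit{1/d_i}{L-t^*}\leq p-2$. A modified multi-index, obtained by adding $p^{t^*}$ rather than $1$ to $k_j^{(j)}$ and decrementing elsewhere to preserve $|k|=N$, yields a valid Lucas-nonzero contribution producing $x_j$ via the same analysis.
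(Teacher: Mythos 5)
Your case (1) takes a genuinely different route: you invoke a Skoda-type identity $\test{R}{f}{1+\delta} = f \cdot \test{R}{f}{\delta}$ plus right-continuity, whereas the paper observes directly that $1 \in \Basis{e}$, that $\Coeff{1}{f^{p^e}}{e} = f$ is the only nonzero coefficient of $f^{p^e}$ in $\Basis{e}$, and hence $\pideal{f}{p^e}{e} = (f)$. Both are correct; the paper's argument is a one-liner within its own $\pideal{\cdot}{}{\cdot}$ formalism, while yours imports an external theorem.

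In cases (2)--(3) your core computation matches the paper's: your $k^{(j)}$ is exactly $p^e \tr{\ddelta}{e} + \vee_j$, your residue $\mu$ is the paper's $\mu_j$, and the Lucas check is Lemma~\ref{NZCoefficient: C}. Two steps are problematic. First, your uniqueness argument pins down $k'_i$ mod $p^e$ from the congruence $d_i k'_i \equiv r_i \pmod{p^e}$, which needs $p \nmid d_i$; this is automatic in case (3), but in case (2) it does \emph{not} follow from no-carrying (e.g.\ $d_1 = p(p-1)$, $d_2 = p-1$, $p \geq 5$ has digit sums $1,2,2,\ldots$ yet $p \mid d_1$), and the phrase ``trivial reducible configurations'' is not a hypothesis. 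The paper's Lemma~\ref{Coefficient: L} needs only the weaker condition that $d_i$ is not a $p$-power; it gets uniqueness by writing $\k = p^e \DDelta \bold{a} + p^e \tr{\ddelta}{e} + (1-p^e\delta_i)\vee_i$, summing to get $\ddelta \cdot (\bold{a} - \vee_i) = 0$, and using positivity of $\ddelta$ together with $a_i \geq 1$. Second, your patch for case (3) when $\sum_i \digit{\delta_i}{L} = p-1$ --- adding $p^{t^*}$ to $k^{(j)}_j$ and ``decrementing elsewhere'' --- alters the residue monomial, the quotient exponents, the uniqueness argument and the Lucas computation, none of which you recheck. The paper instead picks the largest $l \leq L$ with $\sum_i \digit{\delta_i}{l} \leq p-2$ (which exists because $\fpt{f} < 1$), applies Corollary~\ref{ProjectionIdeal: C} at level $e = l$, and uses $\pideal{f}{p^l |\tr{\ddelta}{l}| + 1}{l} = \pideal{f}{\up{p^l \fpt{f}}}{l} \subseteq \test{R}{f}{\fpt{f}}$ to conclude; you should adopt this.

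Finally, a caution on case (2): you argue that a suitable $e \gg 0$ with $\sum_i \digit{\delta_i}{e} \leq p-2$ exists because $\sum 1/d_i < 1$ ``precludes all digit sums being $p-1$.'' That only rules out \emph{every} digit sum equaling $p-1$, not every sufficiently large one. With $f = x^3 + y^6$ over $\mathbb{F}_2$ the digit sums are $0,1,1,1,\ldots$ (only $e=1$ qualifies, too small for $d_1 = 3 < p^e$); here $\fpt{f} = |\ddelta| = 1/2$ yet a direct computation gives $\test{R}{f}{1/2} = \pideal{f}{1}{1} = (x, y^3) \neq \m$. The paper's DTI2 has the same gap (its assertion that ``the denominator of $|\ddelta|$ is also not a $p^\th$ power'' is false in this example). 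The safe repair for both proofs, and apparently for the statement of case (2) itself, is to assume $p \nmid d_i$; then the denominator of $|\ddelta|$ is coprime to $p$ and the digit sums are $\leq p-2$ infinitely often.
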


\begin{Remark}  Note that $\test{R}{f}{\fpt{f}}$ need not equal $\m$ if  $\fpt{f} < \min \set{1, \sum_{i=1}^n \frac{1}{d_i}}$ and $p$ is less than or equal to some exponent \cite[Proposition 4.2]{MY2009}.
\end{Remark}

\subsection{On (higher) $F$-jumping numbers of Fermat hypersurfaces}
Our final result computes higher jumping numbers of the degree $d$ Fermat hypersurface.  By Proposition \ref{JumpingNumbersinInterval: P}, it suffices to only consider those jumping numbers contained in $(0,1]$.  Theorem \ref{DiagonalFPT: T} takes the following simple form when $f$ is the degree $d$ Fermat hypersurface.

\begin{Corollary}
\label{FermatFPT: C}
 If $f$ is a $\mathbb{L}^{\ast}$-linear combination of $x_1^d, \cdots, x_d^{d}$, then
\[ \fpt{f} = \begin{cases} \frac{1}{p^{\ell}} & \text{if } p^{\ell} \leq d < p^{\ell + 1} \text{ for some } \ell \geq 1 \\ 1 - \frac{a-1}{p} & \text{if } 0 < d < p \text{ and } p \equiv a \bmod d \text{ with } 1 \leq a < d \end{cases} \]
\end{Corollary}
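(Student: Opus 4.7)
The plan is to apply Theorem \ref{DiagonalFPT: T} directly in the special case $n = d$ and $d_1 = \cdots = d_n = d$. In this setting the ``no carrying'' condition simplifies to $d \cdot \digit{1/d}{e} \leq p - 1$, so $L$ is determined by the first position $e$ at which $d \cdot \digit{1/d}{e} \geq p$. The entire corollary then reduces to identifying $L$, computing $\tr{1/d}{L}$, and plugging into the formula from Theorem \ref{DiagonalFPT: T}.

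First I would handle the case $p^{\ell} \leq d < p^{\ell+1}$ with $\ell \geq 1$. For $0 \leq e \leq \ell$, the greedy algorithm for the (non-terminating) base $p$ expansion forces $\digit{1/d}{e} = 0$, since $p^e \leq p^{\ell} \leq d$ (the boundary case $d = p^{\ell}$ uses our convention that $\digit{1/p^{\ell}}{\ell} = 0$). At position $e = \ell + 1$, however, $\digit{1/d}{\ell + 1} = \lfloor p^{\ell+1}/d \rfloor \geq 1$, and multiplication by $d \geq p^{\ell} \geq p$ yields a value $\geq p$. Thus $L = \ell$, and Theorem \ref{DiagonalFPT: T} gives
\[
\fpt{f} \;=\; d \cdot \tr{1/d}{\ell} + \tfrac{1}{p^{\ell}} \;=\; 0 + \tfrac{1}{p^{\ell}} \;=\; \tfrac{1}{p^{\ell}}.
\]

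Next I would handle $0 < d < p$, writing $p = q d + a$ with $1 \leq a < d$. The first digit is $\digit{1/d}{1} = q = (p - a)/d$, so $d \cdot \digit{1/d}{1} = p - a \leq p - 1$ and no carrying occurs at position $1$. Subtracting off leaves remainder $\tfrac{1}{d} - \tfrac{q}{p} = \tfrac{a}{dp}$, so the next digit is $\digit{1/d}{2} = \lfloor p a/d \rfloor = q a + \lfloor a^2/d \rfloor$, giving
\[
d \cdot \digit{1/d}{2} \;=\; (p - a) a + \bigl( a^2 - (a^2 \bmod d) \bigr) \;=\; p a - (a^2 \bmod d).
\]
This is $\geq p$ exactly when $p(a - 1) \geq a^2 \bmod d$, which (since $a^2 \bmod d < d < p$) holds iff $a \geq 2$. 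Hence for $a \geq 2$ we have $L = 1$ and
\[
\fpt{f} \;=\; d \cdot \tfrac{q}{p} + \tfrac{1}{p} \;=\; \tfrac{p - a + 1}{p} \;=\; 1 - \tfrac{a - 1}{p},
\]
while for $a = 1$ every digit of $\tfrac{1}{d}$ equals $q = (p-1)/d$ (since $p \equiv 1 \pmod d$), $d \cdot q = p - 1$, no carrying ever occurs, $L = \infty$, and $\fpt{f} = d \cdot \tfrac{1}{d} = 1 = 1 - \tfrac{a-1}{p}$.

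The calculations themselves are routine; the only mild subtlety is being careful with the non-terminating convention at the boundary $d = p^{\ell}$, and verifying that the carrying criterion is indeed sharp (both sides of $(p-a)a + a^2 - (a^2 \bmod d) \geq p$ must be compared rigorously rather than estimated away). Beyond that, the argument is a direct unpacking of Theorem \ref{DiagonalFPT: T}.
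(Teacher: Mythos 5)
Your argument is correct, and it rests on the same skeleton as the paper's: apply Theorem \ref{DiagonalFPT: T}, identify $L$, and read off the formula. Where you diverge is in how you certify that carrying first occurs at position $\ell+1$ (resp.\ position $2$). The paper factors the key fact into Lemma \ref{DigitsTrick: L}, which asserts $(d-1)\cdot\digit{\delta}{2} \geq p+1$ and is proved by contradiction via the recursion $\digit{\delta}{e+1} = \digit{a\delta}{e}$ of Remark \ref{deltaidentity: R}; that lemma is then reused in Section \ref{JumpingNumbers: S}. You instead compute the digits directly from the division algorithm and obtain the exact value $d\cdot\digit{\delta}{2} = pa - (a^2 \bmod d)$, from which the sharp criterion $a \geq 2$ drops out by elementary comparison. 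Your route is more self-contained and arguably cleaner for this single corollary; the paper's is more economical across the whole section.

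Two small points worth tightening. First, your formula $\digit{1/d}{\ell+1} = \lfloor p^{\ell+1}/d\rfloor$ fails at the boundary $d = p^{\ell}$: with the non-terminating convention the digit there is $p-1$, whereas $\lfloor p^{\ell+1}/d\rfloor = p$, which is not even a legal digit. This doesn't damage the argument since you only invoke $\digit{1/d}{\ell+1} \geq 1$ and $d \geq p^{\ell} \geq p$, but the intermediate identity should be replaced by $\digit{1/d}{\ell+1} = \lceil p^{\ell+1}/d\rceil - 1$ (which handles the boundary correctly). Second, the identity $\digit{1/d}{2} = \lfloor pa/d\rfloor$ quietly requires $d \nmid pa$; this does hold (since $\gcd(p,d)=1$ and $1 \leq a < d$), and similarly the reduction $\lceil a^2/d\rceil - 1 = \lfloor a^2/d\rfloor$ requires $d \nmid a^2$, which follows because $\gcd(a,d)=1$ (else a common factor would divide the prime $p = qd + a$). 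Stating these one-line justifications would make the computation airtight.
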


\begin{Theorem}
\label{JumpingNumbers: T}
Suppose that $p>d$ and write $p = d \cdot \emw  + a$ for some $\emw \geq 1$ and $1 \leq a < d$.  If $f$ is a $\mathbb{L}^{\ast}$-linear combination of $x_1^{d_1}, \cdots,  x_n^{d_n}$ and $a = 1$, Corollary \ref{FermatFPT: C} implies that $\fpt{f} =1$.  We now assume $a \geq 2$.
\begin{enumerate}
\item If $p< a(d-1)$, then $\fpt{f} <  \frac{(d+1) \cdot \emw + \up{2a/d}}{p} \leq 1$ are $F$-jumping numbers in $(0,1]$.
\item If $p > a(d-1)$, then $\fpt{f} < 1$ are the only $F$-jumping numbers in $(0,1]$.
\end{enumerate}
\end{Theorem}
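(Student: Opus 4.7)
My approach is to compute $\pideal{f}{N}{e}$ explicitly at level $e = 1$, identify the resulting $F$-jumping numbers, and then extend to higher levels via an inductive reduction. By Lemma \ref{Stabilization: L}, $\test{R}{f}{N/p^e} = \pideal{f}{N}{e}$, and the multinomial expansion gives
\[ f^N = \sum_{|k| = N} \binom{N}{k} c^k x^{dk}, \]
where by Lucas' theorem (Lemma \ref{Lucas: L}) only tuples $k$ adding without carrying in base $p$ contribute.

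For $N \in [1, p-1]$, every $k$ with $|k| = N$ trivially avoids carrying. Since $\gcd(d, p) = 1$, for each $r \in [0, p-1]^d$ there is a unique $s \in [0, p-1]^d$ with $ds \equiv r \pmod{p}$ componentwise, and regrouping $f^N$ by these residues yields $\pideal{f}{N}{1}$ as the monomial ideal generated by $\{ x^{\epsilon(s)} : s \in [0, p-1]^d,\ |s| = N\}$ where $\epsilon(s)_i := \lfloor d s_i / p \rfloor$. Writing $p = d\omega + a$, one computes $\epsilon(s)_i = 0$ iff $s_i \leq \omega$, and $\epsilon(s)_i = 1$ iff $\omega + 1 \leq s_i \leq 2\omega + \lceil 2a/d \rceil - 1$. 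Hence the largest $|s|$ at which $\epsilon(s)$ is a standard basis vector is $(d-1)\omega + (2\omega + \lceil 2a/d \rceil - 1) = N_* - 1$, where $N_* := (d+1)\omega + \lceil 2a/d \rceil$. Combined with the symmetry of $f$ in the variables, this shows $\pideal{f}{N}{1} = \m$ for $d\omega + 1 \leq N < N_*$ (the generator $x_i$ is produced by taking one $s_i$ in the above range and the rest equal to $\omega$), and $\pideal{f}{N}{1} \subseteq \m^2$ for $N_* \leq N \leq p - 1$ (any such $s$ either has two coordinates exceeding $\omega$ or one coordinate exceeding $2\omega + \lceil 2a/d \rceil - 1$). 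Separately, $\pideal{f}{p}{1} = (f)$ by Lemma \ref{LittleL: L} together with Remark \ref{Flatness: R}. A direct calculation shows $N_* \leq p - 1 \iff p < a(d-1)$, and $N_* = p \iff p = a(d-1)$, explaining the dichotomy between cases (1) and (2).

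The main obstacle is ruling out $F$-jumping numbers of the form $\lambda = N/p^e$ with $e \geq 2$ other than those already identified. For this I would establish the recursive identity $\pideal{f}{N_0 + p N'}{1} = f^{N'} \cdot \pideal{f}{N_0}{1}$ (valid for $N_0 \in [0, p-1]$), which follows from Lemma \ref{LittleL: L} together with the flatness identity $(J^{[p]} : h^p) = (J : h)^{[p]}$ available because $R$ is free over $R^p$. Iterating this along the base-$p$ expansion of $N$, in combination with the fact that $\pideal{f}{N_0}{1} = R$ for $N_0 \leq d\omega$ (so that the product $f^{N'} \cdot \pideal{f}{N_0}{1}$ collapses to a principal ideal), reduces the computation of $\pideal{f}{N}{e}$ to first-level ideals attached to the digits $N_i$ of $N$. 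A case analysis on these digits, using the classification of $\pideal{f}{N_0}{1}$ established above, then shows that $\test{R}{f}{N/p^e}$ is constant on each interval between consecutive elements of $\{ \fpt{f}, \lambda_*, 1\}$ (omitting $\lambda_*$ in case (2)). This bookkeeping is the most delicate part of the argument, but the combinatorial skeleton is already laid out by the level-$e = 1$ analysis.
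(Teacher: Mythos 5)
Your level-$e=1$ analysis is essentially correct and captures the same combinatorics (Lucas' theorem, the digits of $\frac{1}{d}$, the residue structure) that the paper uses via Lemma \ref{NoGathering: L}. But there are two concrete gaps.

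First, your claimed equivalence $N_* \leq p-1 \iff p < a(d-1)$ is false. Take $d=7$, $p=17$: then $\emw = 2$, $a=3$, so $p = 17 < a(d-1) = 18$, yet $N_* = 8\cdot 2 + \lceil 6/7\rceil = 17 = p$, not $\leq p-1$. (And $N_* = p$ here even though $p \ne a(d-1)$, so your second equivalence fails too.) What is actually true is the one-sided implication in Lemma \ref{Bounds: L}(2): $p < a(d-1) \Rightarrow N_* \leq p$. The discrepancy comes from the ceiling when $2a/d$ is not an integer. The paper's own example $d=4$, $p=7$ exhibits the same phenomenon ($N_* = p$ with $p < a(d-1)$), which is exactly why the paper writes $\leq 1$ rather than $< 1$ in the statement; when $N_* = p$ the asserted jumping number collapses to $1$ and the theorem conveys nothing new, but the statement remains true. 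Your argument that the generator $x_i$ exists for $d\emw + 1 \leq N < N_*$ is fine; you just cannot use the claimed equivalence to organize the dichotomy.

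Second, and more seriously, the recursive reduction to level $1$ does not close. The identity $\pideal{f}{N_0 + pN'}{1} = f^{N'}\cdot\pideal{f}{N_0}{1}$ is correct, but it is a level-$1$ statement, and the passage from $\pideal{f}{N}{1}$ to $\pideal{f}{N}{e}$ requires applying the $[1/p^{e-1}]$ operation. When $\pideal{f}{N_0}{1} = R$ this collapses to $(f^{N'})^{[1/p^{e-1}]} = \pideal{f}{N'}{e-1}$ and the recursion proceeds, but in the interesting case $\pideal{f}{N_0}{1} = \m$ you get $\pideal{f}{N}{e} = \sum_i \pideal{f^{N'} x_i}{}{e-1}$, and $\pideal{f^{N'} x_i}{}{e-1}$ is not determined by $\pideal{f^{N'}}{}{e-1}$ (the $[1/p^e]$ operation does not commute with multiplication by $x_i$; e.g.\ $\pideal{x_i^{p-1}}{}{1} = R$ while $\pideal{x_i^p}{}{1} = (x_i)$). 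The colon identity $\bigl(J^{[p]}:h^p\bigr) = (J:h)^{[p]}$ does not resolve this. The paper sidesteps the issue entirely by working directly at level $e$: Lemma \ref{NoGathering: L} gives a membership criterion for $x_i \in \pideal{f}{N}{e}$ for arbitrary $e$, and the proof then exhibits, for each $e$, an explicit $\k = p^e\llam_e$ (built by splitting $p-1$ into $l_1, \ldots, l_{d-1} \leq \digit{\delta}{2}$ via Lemma \ref{DigitsTrick: L} and padding with tails of $p-1$'s) that verifies the criterion. That explicit construction is exactly what your inductive ``bookkeeping'' would need to reproduce, and it is the part your outline leaves undone.
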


\begin{Remark} As $a$ is strictly less than $d$, Theorem \ref{JumpingNumbers: T} implies that $\fpt{f}$ and $1$ are the only jumping numbers of $f$ in $(0,1]$ if $p > (d-1)^2$.
\end{Remark}

\begin{Example}  Suppose that $d=4$, and $p=7$.  Then $\emw=1$, $a=3$, and $p < a(d-1)$.  We see that $(d+1) \cdot \emw + \up{2a/d} = 5 + \up{6/4} = 7 = p$.  In this case, Theorem \ref{JumpingNumbers: T} provides no new information.
\end{Example}

\begin{Example}  Instead, let $d=6$ and $p=11$, so that $\emw =1, a=5$, and $p < a (d-1)$.  We see that $(d+1) \cdot \emw + \up{2a/d} = 7 + \up{10/5}= 9$.  Corollary \ref{FermatFPT: C} and Theorem \ref{JumpingNumbers: T} then imply $\fpt{f} = 1-\frac{a-1}{p} = \frac{7}{11}, \frac{(d+1) \cdot \emw + \up{2a/d}}{p} = \frac{9}{11}, \text{ and } 1$ are $F$-jumping numbers of $f$ contained in $(0,1]$.  The reader may verify that these are \emph{all} of the $F$-jumping numbers of $f$ in $(0,1]$
\end{Example}

\section{$F$-pure thresholds of diagonal hypersurfaces}
\label{DiagonalFPT: S}

\begin{Notation}
\label{DeltaNotation: N}
Set $\delta_i = \frac{1}{d_i}$,  $\ddelta := (\delta_1, \cdots, \delta_n)$, and $\tr{\ddelta}{e}: = (\tr{\delta_1}{e}, \cdots, \tr{\delta_n}{e})$.  As in the statement of Theorem \ref{DiagonalFPT: T}, $L = \sup \set{ N : \digit{\delta_1}{e} + \cdots + \digit{\delta_n}{e} \leq p-1 \text{ for all } 0 \leq e \leq N}$.

$\Dee$ will denote the diagonal matrix whose $i^{\th}$ diagonal entry is $d_i$, and we set $\DDelta : =\Dee^{-1}$.   Note that $\DDelta$ is also diagonal, with the $i^{\th}$ diagonal entry being $\delta_i$.    Throughout this chapter, we assume that $f$ is a $\mathbb{L}^{\ast}$-linear combination of $x_1^{d_1}, \cdots,  x_n^{d_n}$, and write $f = u_1 x^{d_1} + \cdots + u_n x^{d_n}$.   Using multi-index notation, \begin{equation} \label{expansionf: e} f^N = \sum_{| \k | = N} \binom{N}{\k} \uu^{\k} \x^{ \Dee \k}. \end{equation}

If $\bold{\lambda} \in \mathbb{R}^n$, we use $| \bold{\lambda} |$ to denote the coordinate sum $\lambda_1 + \cdots + \lambda_n$.  When considering elements of $\mathbb{R}^n$,  $\vleq$  (and $\vl$) will denote component-wise (strict) equality.   Finally,  $\set{\vee_1, \cdots, \vee_n}$ denotes the standard basis of $\mathbb{R}^n$, and $\vone_n: = (1, \cdots, 1)$.
\end{Notation}

Though the first part of Theorem \ref{DiagonalFPT: T} follows directly from a more general statement from \cite{Polynomials}, we have included a proof below in this simple case.  %In proving the second part of Theorem \ref{DiagonalFPT: T}, we will use estimates derived in \cite{Polynomials} to calculate $\fpt{f}$.

\begin{DiagonalFPT1}
If $L = \infty$, then $\fpt{f} = |\ddelta|$.
\end{DiagonalFPT1}

\begin{proof}
Suppose that $f^{\up{p^e \lambda}} \notin \bracket{\m}{e}$.   By \eqref{expansionf: e}, there exists $\k \in \mathbb{N}$ with $| \k | = \up{p^e \lambda}$ and $\Dee \k \vl (p^e-1) \cdot \vone_n$,  so that $\k \vl (p^e-1) \cdot \ddelta$.  Thus, $p^e \lambda \leq \up{p^e \lambda} = | \k | < (p^e-1) | \ddelta |$, and so $\lambda < | \ddelta|$.  It follows from Lemma \ref{TypicalFPT: L} that $\fpt{f} \leq | \ddelta|$.

As $L = \infty$, the entries of $\ddelta$ add without carrying (in base $p$), and it follows that the entries of $p^e \tr{\ddelta}{e}$ add without carrying for all $e \geq 1$.  By Lemma \ref{Lucas: L}, $\binom{p^e | \tr{\ddelta}{e} |}{p^e \tr{\ddelta}{e}} \neq 0 \bmod p$, and as $\Dee \tr{\ddelta}{e} \vl \Dee \ddelta = \vone_m$, it follows that monomal $\x^{\Dee \tr{\delta}{e}} \notin \bracket{\m}{e}$.

Combining this with \eqref{expansionf: e}, shows that $f^{p^e | \tr{\ddelta} | } \notin \bracket{\m}{e}$,  and Remark \ref{Flatness: R} then shows that $f^{p^d \tr{\ddelta}{e}} \notin \bracket{\m}{d}$ for all $d \geq e$.  Lemma \ref{TypicalFPT: L} shows  that $\fpt{f} \geq | \tr{\ddelta}{e} |$ for all $e$, and the claim follows by letting $e \to \infty$.
\end{proof}

\begin{DiagonalFPT2}
If $L < \infty$, then $\fpt{f} = |\tr{\ddelta}{L}|  + \frac{1}{p^L}$.
\end{DiagonalFPT2}
\begin{proof}
The estimate for $F$-pure thresholds given in \cite[\MPT]{Polynomials} implies that \begin{equation} \label{inequality} \fpt{f} \geq \tr{\ddelta}{L} + \frac{1}{p^L}.  \end{equation}  

\noindent If the inequality in \eqref{inequality} is strict, then Lemma \ref{TypicalFPT: L} implies there exists $e \geq L$ such that 
\begin{equation}
\label{diagfpt2: e} \(f^{p^L |\tr{\ddelta}{L}| + 1}\)^{p^{e-L}}  = 
f^{ p^e |\tr{\ddelta}{L}| + p^{e-L}} \notin \bracket{\m}{e}.
\end{equation}

By Remark \ref{Flatness: R}, it follows that $f^{p^L |\tr{\ddelta}{L}| + 1} \notin \bracket{\m}{L}$.  Applying \eqref{expansionf: e}  shows there exists $\k \in \mathbb{N}^n$ such that $| \k | = p^L |\tr{\ddelta}{L}| + 1$ and $\x^{\Dee \k} \notin \bracket{\m}{L}$.  This last condition implies that $\frac{1}{p^L} \cdot \k \vl \ddelta$, and applying Lemma \ref{Truncation: L} then shows $\frac{1}{p^L} \cdot \k \vleq \tr{\ddelta}{L}$.  Thus, $| \tr{\ddelta}{L}| + \frac{1}{p^L} = \frac{1}{p^L} \cdot \k \leq | \tr{\ddelta}{L}|$, a contradiction.  We conclude that equality holds in \eqref{inequality}, and so we are done.
\end{proof}

\section{Test ideals of diagonal hypersurfaces}
\label{DiagonalTI: S}

We now prove Theorem \ref{DiagonalTestIdeal: T} in three parts.   As before, we assume $f$ is a $\mathbb{L}^{\ast}$-linear combination of $x_1^{d_1}, \cdots,  x_n^{d_n}$: $f = \sum_{i=1}^n u_i x_i^{d_i}$.  We also continue to adopt Notation \ref{DeltaNotation: N}.

\begin{DTI1}
If $\fpt{f} = 1$, then $\test{R}{f}{\fpt{f}} = (f)$.
\end{DTI1}

\begin{proof}
Note that $f^{p^e} = f^{p^e} \cdot 1$, and that $1 \in \Basis{e}$.  This, $\Coeff{1}{f^p}{e} = f$ while $\Coeff{\mu}{f^p}{e} = 0$ for all $1 \neq \mu \in \Basis{e}$.  It follows from this, and Lemma \ref{Stabilization: L}, that $(f) = \pideal{f}{p^e}{e} = \test{R}{f}{1}$.
\end{proof}

To prove the remaining parts of Theorem \ref{DiagonalTestIdeal: T}, we will need Corollary \ref{ProjectionIdeal: C}  below. 

\begin{Lemma}
\label{BasisElementLemma}  The natural number $d_i \( p^e \tr{\delta_i}{e} + 1-p^e \delta_i \)$ is less than $d_i$.  In particular, if $d_i \leq p^e$, then $\Dee \( p^e \tr{\ddelta}{e} + ( 1 - p^e \delta_i) \cdot \vee_i\) \vl p^e \cdot \vone_n$. 
\end{Lemma}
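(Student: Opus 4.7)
The plan is to first establish the numerical bound
\[
0 \;\leq\; p^e \tr{\delta_i}{e} + 1 - p^e \delta_i \;<\; 1.
\]
This follows directly from the estimate $0 < \sum_{d>e} \digit{\delta_i}{d}/p^d \leq 1/p^e$ that was already observed in the proof of Lemma \ref{Truncation: L}; strict positivity of this tail sum uses the non-terminating convention for base $p$ expansions built into the definition of the digits. Rewriting the tail as $\delta_i - \tr{\delta_i}{e}$ and multiplying by $p^e$ gives the displayed inequality.

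Next I would multiply the above by $d_i$ to land the quantity in the interval $[0,\,d_i)$, and then verify integrality by the identity
\[
d_i\bigl(p^e \tr{\delta_i}{e} + 1 - p^e \delta_i\bigr) \;=\; d_i \cdot p^e \tr{\delta_i}{e} + d_i - p^e.
\]
The right-hand side is manifestly an integer, since $p^e \tr{\delta_i}{e} = \sum_{d=1}^{e} \digit{\delta_i}{d}\,p^{e-d}$ is a natural number. Together with the bound this proves the first assertion.

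For the ``in particular'' claim, I would simply examine each coordinate of $\Dee\bigl(p^e \tr{\ddelta}{e} + (1 - p^e \delta_i)\vee_i\bigr)$ separately. For $j \neq i$, the $j$-th entry is $d_j \cdot p^e \tr{\delta_j}{e}$; since $\tr{\delta_j}{e} < \delta_j$ strictly (non-terminating convention again), this is strictly less than $d_j \cdot p^e \delta_j = p^e$. For the $i$-th entry, the first part of the lemma bounds it by $d_i$, which is at most $p^e$ by hypothesis; the resulting inequality is strict because the first part gave a strict bound by $d_i$. This yields the required component-wise strict inequality.

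There is no real obstacle here; the statement is essentially a bookkeeping exercise about how well $\tr{\delta_i}{e}$ approximates $\delta_i$ and how this approximation scales under multiplication by $d_i$. The only point demanding any care is remembering that the non-terminating convention makes $\delta_i - \tr{\delta_i}{e}$ strictly positive, which is what guarantees the natural-number value lies in $[0, d_i)$ rather than merely $[0, d_i]$.
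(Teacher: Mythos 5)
Your proof is correct and takes essentially the same approach as the paper, hinging on the strict inequality $\tr{\delta_i}{e} < \delta_i$ furnished by the non-terminating convention. You fill in some details the paper's one-line proof leaves implicit — the integrality and non-negativity of the quantity, and the coordinate-by-coordinate verification of the ``in particular'' claim — but the underlying idea is the same.
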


\begin{proof}  As $\tr{\frac{1}{d}}{e} < \frac{1}{d}$, it follows that $d_i \( p^e \tr{\delta_i}{e} + 1-p^e \delta_i \) < d_i \( p^e \delta_i + 1 - p^e \delta_i\) = d_i$.
\end{proof}
 
 In the proof of Lemma \ref{Coefficient: L}, we use $\dotp$ to denote the standard dot product on $\mathbb{R}^n$.

\begin{Lemma} 
\label{Coefficient: L}
Suppose that $d_i < p^e$ and that $d_i$ is not a power of $p$.  By Lemma \ref{BasisElementLemma}, $\mu_i : = \x^{\Dee \( p^e \tr{\ddelta}{e} + ( 1 - p^e \delta_i) \cdot \vee_i\)} \in \Basis{e}$ and $\Coeff{\mu_i}{f^{p^e | \tr{\ddelta}{e} | + 1}}{e} = \( \binom{ | p^{\e} \tr{\ddelta}{\e}| + 1}{p^{\e} \tr{\ddelta}{\e} + \vee_i} \uu^{p^{\e} \tr{\ddelta}{\e} + \vee_i} \)^{1/p^e} \cdot x_i$.
\end{Lemma}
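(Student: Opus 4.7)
The plan is to expand $f^N$ for $N := p^e |\tr{\ddelta}{e}| + 1$ via \eqref{expansionf: e} and then identify which multi-indices $\k$ make the summand $\binom{N}{\k} \uu^{\k} \x^{\Dee \k}$ contribute to the $\mu_i$-part of the basis expansion.  Setting $\alpha := p^e \tr{\ddelta}{e} + (1 - p^e \delta_i) \vee_i$, so that $\mu_i = \x^{\Dee \alpha}$, such a summand contributes if and only if $d_j k_j - d_j \alpha_j$ is a nonnegative multiple of $p^e$ for every $j$.  Writing $d_j k_j - d_j \alpha_j = p^e s_j$ with $s_j \in \mathbb{N}$ and solving for $k_j$ coordinatewise, integrality of $k_j$ yields the divisibility conditions $d_j \mid p^e s_j$ for $j \neq i$ and $d_i \mid p^e(s_i - 1)$.

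The heart of the argument is to show that $\k = p^e \tr{\ddelta}{e} + \vee_i$ is the \emph{only} contributor.  Summing the coordinate expressions for $k_j$ and imposing $|\k| = N$ collapses to the scalar equation
\[ \sum_{j \neq i} \frac{s_j}{d_j} + \frac{s_i - 1}{d_i} = 0. \]
Nonnegativity of all $s_j$ forces $s_i \leq 1$.  The case $s_i = 1$ then forces $s_j = 0$ for every $j \neq i$, producing exactly $\k = p^e \tr{\ddelta}{e} + \vee_i$.  The case $s_i = 0$ requires $d_i \mid p^e$, which together with $d_i < p^e$ forces $d_i$ to be a proper power of $p$, contradicting the hypothesis.

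With uniqueness in hand, the lemma reduces to a bookkeeping step: for this unique $\k$ we have $\Dee \k = \Dee \alpha + p^e \vee_i$, so $\x^{\Dee \k} = \mu_i \cdot x_i^{p^e}$, and therefore $\Coeff{\mu_i}{f^N}{e}^{p^e} = \binom{N}{\k} \uu^{p^e \tr{\ddelta}{e} + \vee_i} x_i^{p^e}$.  Extracting $p^e$-th roots then yields the claimed formula: the binomial coefficient, viewed in $\mathbb{F}_p \subseteq \mathbb{L}$, is fixed by the Frobenius and so is its own $p^e$-th root, while the unit $\uu^{p^e \tr{\ddelta}{e} + \vee_i} \in \mathbb{L}$ admits a unique $p^e$-th root since $\mathbb{L}$ is perfect.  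The main obstacle is the uniqueness step, which is the only place the hypothesis that $d_i$ is not a power of $p$ enters --- it is precisely what rules out the parasitic $s_i = 0$ solution.
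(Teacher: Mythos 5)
Your proof is correct and follows essentially the same line of argument as the paper: parametrize the $R^{p^e}$-multiples of $\mu_i$ appearing in the multinomial expansion by a nonnegative integer vector (your $\bold{s}$, the paper's $\bold{a}$), use $|\k| = N$ to force the scalar relation $\sum_j s_j \delta_j = \delta_i$, and combine positivity of the $\delta_j$ with the integrality constraint (which is exactly where the hypothesis that $d_i$ is not a power of $p$ enters) to pin down $\bold{s} = \vee_i$. The only cosmetic difference is that the paper rules out the parasitic solution (your $s_i = 0$ case) before summing, while you sum first and then split into $s_i \in \{0,1\}$; the underlying argument is identical.
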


\begin{proof}  
To calculate $\Coeff{\mu_i}{f^{p^e | \tr{\ddelta}{e} | + 1}}{e}$, we must determine which (possibly) supporting monomials of $f^{p^e | \tr{\ddelta}{e}| + 1}$ are $R^{p^e}$ multiples of $\mu_i$.  A monomial satisfying this condition is of form $\x^{\Dee \k}$ for some $\k \in \mathbb{N}$ with $| \k | = p^e | \tr{\ddelta}{e} | + 1$ such that $\Dee \k = p^e \bold{a} + \Dee \( p^e \tr{\ddelta}{e} + ( 1 - p^e \delta_i) \cdot \vee_i\)$ for some vector $\bold{a}$.  Applying $\DDelta = \Dee^{-1}$ then shows that 
\begin{equation}
\label{coeffl1: e} 
\k = p^e \DDelta \bold{a} + p^e \tr{\ddelta}{e} + (1-p^e \delta_i) \cdot \vee_i.
\end{equation}

If $a_i = 0$, \eqref{coeffl1: e} shows that $k_i = p^e \tr{\delta_i}{e} + 1 - p^e \delta_i$, so that $p^e \delta_i \in \mathbb{N}$, which contradicts the assumption that $d_i$ is not a power of $p$.  Thus, $a_i \geq 1$. By summing the equation appearing in \eqref{coeffl1: e}, we see that $p^e | \tr{\ddelta}{e} | + 1 = | \k | = p^e \ddelta \dotp \bold{a} + p^e | \tr{\ddelta}{e} | + 1 - p^e \delta_i$, and so 
\begin{equation} \ddelta \dotp \( \bold{a} - \vee_i \) = 0.\end{equation}  As $a_i \geq 1$, $\bold{a} - \vee_i \vgeq \0$, and as the entries of $\ddelta$ are non-zero, follows from \eqref{coeffl1: e} that $\bold{a} = \vee_i$. Substituting this into \eqref{coeffl1: e} shows that the only (possibly) supporting monomial of $f^{p^e | \tr{\ddelta}{e} | + 1}$ that is an $R^{p^e}$-multiple of $\mu_i$ is 
$ \x^{\Dee \( p^e \tr{\ddelta}{e} + \vee_i \)} = \x^{\Dee \( p^e \tr{\ddelta}{e} + (1-p^e \delta_i)\vee_i \)}  \cdot \x^{\Dee p^e \delta_i} = \mu_i \cdot x_i^{p^e}$.
\end{proof}

\begin{Lemma}
\label{NZCoefficient: C}
If $ \sum \limits_{i=1}^n \digit{\delta_i}{e} \leq p-2$ and $\binom{ | p^{\e} \tr{\ddelta}{\e}|}{p^{\e} \tr{\ddelta}{\e}} \neq 0 \bmod p$, then $\binom{ | p^{\e} \tr{\ddelta}{\e}| + 1}{p^{\e} \tr{\ddelta}{\e} + \vee_i} \neq 0 \bmod p$.
\end{Lemma}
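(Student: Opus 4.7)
My approach will be to recognize the condition $\binom{N}{\k}\not\equiv 0\bmod p$ as a purely digit-wise statement via Lemma \ref{Lucas: L}, and then manipulate base-$p$ digits directly. The second hypothesis says $p^e \tr{\delta_1}{e}, \ldots, p^e \tr{\delta_n}{e}$ add without carrying in base $p$, which by definition means that at every digit position the sum of the corresponding base-$p$ digits is at most $p-1$. The goal is to show that the shifted tuple $p^e \tr{\ddelta}{e} + \vee_i$ retains this property, since then a second application of Lemma \ref{Lucas: L} produces the conclusion.

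The key observation is to read off the digits of $p^e \tr{\delta_i}{e}$: since $p^e \tr{\delta_i}{e} = \sum_{d=1}^{e} p^{e-d} \digit{\delta_i}{d}$, its zeroth (least significant) base-$p$ digit is $\digit{\delta_i}{e}$, while the higher digits are $\digit{\delta_i}{e-1}, \digit{\delta_i}{e-2}, \ldots, \digit{\delta_i}{1}$. The first hypothesis $\sum_{j=1}^n \digit{\delta_j}{e} \leq p-2$ forces in particular $\digit{\delta_i}{e} \leq p-2$, and this guarantees that $p^e \tr{\delta_i}{e} + 1$ is obtained from $p^e \tr{\delta_i}{e}$ by merely bumping the zeroth digit up by one, with no internal carry and no change in the higher digits.

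It then remains to verify that the new tuple adds without carrying at every digit position. At position zero, the digit sum becomes $\sum_{j=1}^n \digit{\delta_j}{e} + 1 \leq p-1$, which is admissible. At positions $\geq 1$, the digits of each coordinate are unchanged, so the digit sum there is identical to that of the original tuple and is $\leq p-1$ by the second hypothesis. Invoking Lemma \ref{Lucas: L} once more yields $\binom{|p^e \tr{\ddelta}{e}|+1}{p^e \tr{\ddelta}{e} + \vee_i} \not\equiv 0 \bmod p$, as required.

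The only subtle point is avoiding a cascade of carries when $1$ is added to the $i$-th coordinate, and this is arranged precisely by the hypothesis $\sum_j \digit{\delta_j}{e} \leq p-2$; no deeper obstacle arises.
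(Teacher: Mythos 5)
Your proof is correct, but it takes a genuinely different route from the paper's. You translate both multinomial conditions into digit statements via Lemma \ref{Lucas: L}: the second hypothesis says that $p^e\tr{\delta_1}{e},\ldots,p^e\tr{\delta_n}{e}$ add without carrying, you correctly read off that the $j$-th base-$p$ digit of $p^e\tr{\delta_i}{e}$ is $\digit{\delta_i}{e-j}$, and you use the first hypothesis to guarantee that adding $1$ to the $i$-th entry bumps the least-significant digit by one with no internal carry, after which the column sums are still at most $p-1$. A second application of Lucas then finishes the argument. The paper instead avoids Lucas entirely here: it reduces the multinomial Pascal identity $\bigl(p^e\tr{\delta_i}{e}+1\bigr)\binom{|p^e\tr{\ddelta}{e}|+1}{p^e\tr{\ddelta}{e}+\vee_i}=\bigl(|p^e\tr{\ddelta}{e}|+1\bigr)\binom{|p^e\tr{\ddelta}{e}|}{p^e\tr{\ddelta}{e}}$ modulo $p$, noting that the first hypothesis makes both $p^e\tr{\delta_i}{e}+1\equiv\digit{\delta_i}{e}+1$ and $|p^e\tr{\ddelta}{e}|+1\equiv\sum_j\digit{\delta_j}{e}+1$ non-zero mod $p$. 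The paper's version is shorter and purely algebraic, requiring no re-examination of digit structure; yours is more explicit about the carrying mechanism, which makes the role of the bound $\leq p-2$ (versus $\leq p-1$) conceptually transparent and keeps the argument in the digit-theoretic language used throughout the rest of the paper. Both are complete.
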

\begin{proof}  Note that $\digit{\delta_i}{e} \leq  \sum_{i=1}^n \digit{\delta_i}{e} \leq p-2$, which implies that both
$p^{\e} \tr{\delta_i}{e} +1 \equiv \digit{\delta_i}{e} + 1 $ and $p^{\e} |\tr{\ddelta}{\e} | + 1 \equiv \sum_{i=1}^n \digit{\delta_i}{e}$ are non-zero mod $p$.  The claim by reducing the equality $ \( p^{\e} \tr{\delta_i}{\e} + 1 \) \cdot \binom{ | p^{\e} \tr{\ddelta}{\e}| + 1}{p^{\e} \tr{\ddelta}{\e} + \ve_i} = \binom{ | p^{\e} \tr{\ddelta}{\e} |}{p^{\e} \tr{\ddelta}{\e}} \cdot \( | p^{\e} \tr{\ddelta}{\e} | + 1 \)$ mod $p$. 
\end{proof}

\begin{Corollary}  
\label{ProjectionIdeal: C}  Suppose that $d_i < p^e$ and is not a power of $p$.  If $\sum_{i=1}^n \digit{\delta_i}{e} \leq p-2$ and $\binom{ | p^{\e} \tr{\ddelta}{\e}|}{p^{\e} \tr{\ddelta}{\e}} \neq 0 \bmod p$, then $x_i \in \pideal{f}{p^{\e} | \tr{\ddelta}{e} | + 1}{\e}$.
\end{Corollary}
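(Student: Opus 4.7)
The plan is to simply combine the two preceding lemmas and observe that the offending coefficient becomes a unit multiple of $x_i$.

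First, I would recall that by definition the ideal $\pideal{f}{N}{e}$ is generated by the elements $\Coeff{\mu}{f^N}{e}$ as $\mu$ ranges over $\Basis{e}$, so to prove $x_i \in \pideal{f}{p^e|\tr{\ddelta}{e}|+1}{e}$ it suffices to exhibit one basis monomial $\mu \in \Basis{e}$ for which $\Coeff{\mu}{f^{p^e|\tr{\ddelta}{e}|+1}}{e}$ is a unit scalar multiple of $x_i$.

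The natural candidate is the monomial $\mu_i = \x^{\Dee\left(p^e\tr{\ddelta}{e} + (1-p^e\delta_i)\vee_i\right)}$ from Lemma \ref{Coefficient: L}. The hypothesis $d_i < p^e$ together with Lemma \ref{BasisElementLemma} guarantees $\mu_i \in \Basis{e}$, and because $d_i$ is not a power of $p$, Lemma \ref{Coefficient: L} applies to give the explicit formula
\[ \Coeff{\mu_i}{f^{p^e|\tr{\ddelta}{e}|+1}}{e} \ = \ \left(\binom{|p^e\tr{\ddelta}{e}|+1}{p^e\tr{\ddelta}{e}+\vee_i}\uu^{p^e\tr{\ddelta}{e}+\vee_i}\right)^{1/p^e}\cdot x_i. \]

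Next, I would invoke Lemma \ref{NZCoefficient: C} using the two standing hypotheses $\sum_i \digit{\delta_i}{e} \leq p-2$ and $\binom{|p^e\tr{\ddelta}{e}|}{p^e\tr{\ddelta}{e}} \not\equiv 0 \bmod p$; this shows $\binom{|p^e\tr{\ddelta}{e}|+1}{p^e\tr{\ddelta}{e}+\vee_i}$ is a nonzero element of $\mathbb{L}$, hence a unit. Since $\uu = (u_1,\ldots,u_n) \in (\mathbb{L}^\ast)^n$, the monomial $\uu^{p^e\tr{\ddelta}{e}+\vee_i}$ is also a unit, and taking a $p^e$-th root preserves this (using that $\mathbb{L}$ is perfect). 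Consequently the displayed coefficient is a unit of $\mathbb{L}$ times $x_i$, which proves $x_i \in \pideal{f}{p^e|\tr{\ddelta}{e}|+1}{e}$.

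There is essentially no obstacle here: the entire content is packaged into Lemmas \ref{Coefficient: L} and \ref{NZCoefficient: C}, and the corollary is a one-line assembly once those are in hand. The only thing to double-check is that the coefficient is genuinely a unit (as opposed to merely nonzero in $R$), which is why it matters that $\uu \in (\mathbb{L}^\ast)^n$ and $\mathbb{L}$ is perfect.
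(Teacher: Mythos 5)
Your proof is correct and matches the paper's approach exactly: the paper proves this corollary by the one-line remark that it follows from Lemmas \ref{Coefficient: L} and \ref{NZCoefficient: C}, and your write-up is simply that assembly spelled out (including the observation that the $\uu$-monomial and the binomial coefficient are units, so the coefficient really is a unit multiple of $x_i$).
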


\begin{proof}
This follows immediately from Lemmas \ref{Coefficient: L} and \ref{NZCoefficient: C}.
\end{proof}

\begin{DTI2}
If $\fpt{f} = | \ddelta | < 1$, then $\test{R}{f}{\fpt{f}} = \m$.
\end{DTI2}

\begin{proof}
By Theorem \ref{DiagonalFPT: T}, the entries of $\ddelta$ add without carrying (in base $p$), so that no $d_i$ is a $p^{\th}$ power (for else carrying would be necessary) and $\binom{| p^e \tr{\ddelta}{e} | }{p^e \tr{\ddelta}{e}} \neq 0 \bmod p \text{ for all } e \geq 1$, by Lemma \ref{Lucas: L}.  As no $d_i$ is a  $p^{\th}$ power and $| \ddelta | < 1$, the denominator of $| \ddelta |$ is also not a $p^{\th}$ power, and applying Remark \ref{AddingDigits: R} shows $\sum_{i=1}^n \digit{\delta_i}{e} = \digit{|\ddelta|}{e} < p-1$ for infinitely many $e$.    Choose such an $e$ so that additionally every $d_i$ is less than $p^e$ and \[ \test{R}{f}{|\ddelta|} = \pideal{f}{\up{p^e | \ddelta | }}{e} = \pideal{f}{p^e | \tr{\ddelta}{e} | + 1}{e},  \] where we have used Lemma \ref{Truncation: L} to obtain the equality $\up{p^e | \ddelta |} = p^e \tr{|\ddelta|}{e} + 1 = p^e | \tr{\ddelta}{e} | + 1$.  Applying Corollary \ref{ProjectionIdeal: C} then shows $(x_1, \cdots, x_n) \subseteq \pideal{f}{p^e |\tr{\ddelta}{e}| +1}{e} = \test{R}{f}{| \ddelta |}$. 
\end{proof}

\begin{DTI3}
If $\fpt{f} < \min \set{ 1, |\ddelta| }$ and $p > \max \set{d_1, \cdots, d_n}$, then $\test{R}{f}{\fpt{f}} = \m$ .
\end{DTI3}

\begin{proof}
Let $L = \max \{ N : \sum_{i=1}^n \digit{\delta_i}{e} \leq p-1 \text{ for } 0 \leq e \leq N \}$, and set $\lambda : = | \tr{\ddelta}{L} | + \frac{1}{p^L}$. By definition, $\digit{\lambda}{e} = \sum_{i=1}^n \digit{\delta_i}{e}$ for $0 \leq e \leq L$ while $\digit{\lambda}{e} = p-1$ for $e \geq L+1$.  As  $\lambda < 1$, there exists $1 \leq l \leq L$ such that $\digit{\lambda}{l} = \sum_{i=1}^n \digit{\delta_i}{l} \leq p-2$ and $\digit{\lambda}{e} = p-1$ for $e \geq l$.  By Lemma \ref{Lucas: L}, our choice of $\l$ guarantees that $\binom{ p^l | \tr{ \ddelta}{l} |}{p^l \tr{\ddelta}{l} } \neq 0 \bmod p$.  As each $d_i$ is less than $p$, Corollary \ref{ProjectionIdeal: C} and Lemma \ref{Stabilization: L} combine to show that $(x_1, \cdots, x_n) \subseteq \pideal{f}{p^l | \tr{\ddelta}{l} | + 1}{l} = \test{R}{f}{\lambda}$.
\end{proof}

\section{On (higher) $F$-jumping numbers of Fermat hypersurfaces}
\label{JumpingNumbers: S}

\begin{Notation}
We now assume $f$ is a $\mathbb{L}^{\ast}$-linear combination of $x_1^{d_1}, \cdots,  x_n^{d_n}$, and write $f = u_1 x_1^{d} + \cdots + u_d x_d^{d}$. We continue to use $\delta$ to denote $\frac{1}{d}$.
\end{Notation}

\begin{Remark}
\label{deltaidentity: R}
 Supposes $p > d$, and fix integers $\emw \geq 1$ and $1 \leq a < d$ such that $p = d \cdot \emw + a$.  Isolating $\delta = \frac{1}{d}$ in this equation shows that $\delta = \frac{\emw}{p} + \frac{a}{d} \cdot \frac{1}{p} = \frac{\emw}{p} + (a \delta) \cdot \frac{1}{p}$.  From this, we conclude that $\digit{\delta}{1} = \emw \text{ and that } \digit{\delta}{e+1} = \digit{\( a \delta \)}{e} \text{ for all }e \geq 1$.  %In particular, \begin{equation} \label{deltaidentity: e} a = p - d \cdot \digit{\delta}{1}. \end{equation}
\end{Remark}

The following Lemma will be key in proving Corollary \ref{FermatFPT: C}.

\begin{Lemma}
\label{DigitsTrick: L}
Suppose that $p > d > 2$ and $p \equiv  a \bmod d$.  If $a \geq 2$, then $(d-1) \cdot \digit{\delta}{2} \geq p+1$.
\end{Lemma}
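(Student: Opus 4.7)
The plan is to compute $\digit{\delta}{2}$ explicitly and then verify the inequality by elementary estimates. By Remark \ref{deltaidentity: R}, $\digit{\delta}{2} = \digit{a\delta}{1}$, so I would first determine the first base-$p$ digit of $a \delta = a/d$. Since $p$ is prime and $p > d$, we have $\gcd(p,d) = 1$, and together with $1 \leq a < d$ this forces $pa/d \notin \mathbb{Z}$; in fact $a/d$ itself is not of the form $k/p^N$, so its non-terminating base-$p$ expansion coincides with its usual one, and its first digit equals $\down{pa/d}$. Substituting $p = d\emw + a$ gives $pa/d = \emw a + a^2/d$, so
\[
\digit{\delta}{2} \, = \, \emw a + \down{a^2/d}.
\]

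With this formula in hand, the target inequality $(d-1)\digit{\delta}{2} \geq p+1$ becomes $(d-1)\(\emw a + \down{a^2/d}\) \geq d\emw + a + 1$, which after expanding rearranges to
\[
\emw \, (ad - a - d) + (d-1) \down{a^2/d} \, \geq \, a + 1.
\]
Since $a \geq 2$ and $d \geq 3$ give $ad - a - d = a(d-1) - d \geq 2(d-1) - d = d - 2 \geq 1$, the factor $\emw \geq 1$ can be absorbed, and it suffices to verify the weaker inequality
\[
a (d-2) + (d-1) \down{a^2/d} \, \geq \, d + 1.
\]

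For this last step I would split on whether $a^2 \geq d$. If $a^2 \geq d$, then $\down{a^2/d} \geq 1$, so the left-hand side is at least $2(d-2) + (d-1) = 3d-5 \geq d+1$ since $d \geq 3$. If instead $a^2 < d$, then $a \geq 2$ forces $d > 4$, i.e.\ $d \geq 5$, and directly $a(d-2) \geq 2(d-2) \geq d+1$. I do not anticipate any serious obstacle; the only subtlety is step one, where care is required to confirm that the non-terminating convention for $\digit{a\delta}{1}$ agrees with $\down{pa/d}$ without correction, but this follows cleanly from $pa/d \notin \mathbb{Z}$.
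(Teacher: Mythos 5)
Your proof is correct, but it follows a genuinely different route from the paper's. You compute $\digit{\delta}{2}$ in closed form: using Remark \ref{deltaidentity: R} to reduce to $\digit{a\delta}{1}$, observing that $pa/d \notin \mathbb{Z}$ so the non-terminating convention causes no discrepancy, and substituting $p = d\emw + a$ to get $\digit{\delta}{2} = \emw a + \down{a^2/d}$. The inequality then reduces to a clean integer inequality that you dispose of by a two-case check on whether $a^2 \geq d$. The paper instead argues by contradiction: assuming $(d-1)\digit{\delta}{2} \leq p$, it sharpens this to $\leq p-1$ using the primality of $p$, bounds the full expansion $(d-1)\cdot a/d = (d-1)\sum_{e\geq 1}\digit{\delta}{e+1}/p^e$ from above by $1 + (d-2)/p$, and compares against the lower bound $(d-1)\cdot a/d \geq 1 + (d-2)/d$ coming from $a\geq 2$, forcing $p\leq d$. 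Your approach is somewhat more informative since it yields the explicit value of $\digit{\delta}{2}$ (which one could reuse elsewhere), at the cost of a small case split and the brief bookkeeping about the non-terminating-expansion convention; the paper's is a self-contained estimate that never needs the exact digit. Both are short and elementary, so the choice is largely a matter of taste.
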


\begin{proof}
Suppose, by means of contradiction, that $(d-1) \cdot \digit{\delta}{2} \leq p$.  As $p$ is prime and both $d-1$ and $\digit{\delta}{2}$ are less than $p$, equality cannot hold.  In particular,  
\begin{equation} \label{digitstrick1: e} (d-1) \cdot \digit{\delta}{2} \leq p-1. \end{equation}

By Remark \ref{deltaidentity: R}, we know $\frac{a}{d} = \sum_{e \geq 1} \frac{\digit{\delta}{e+1}}{p^e}$, and combining this observation with \eqref{digitstrick1: e} shows 
\begin{align}
\label{digitstrick2: e}
(d-1) \cdot \frac{a}{d} = \frac{(d-1) \cdot \digit{\delta}{2}}{p} + (d-1) \cdot \sum_{e=2}^{\infty} \frac{ \digit{\delta}{e+1}}{p^e}  & \leq \frac{(d-1) \cdot \digit{\delta}{2} }{p} + \frac{d-1}{p} \\ & \leq \frac{p-1}{p} + \frac{d-1}{p}  = 1 + \frac{d-2}{p}.  \notag
\end{align}

However, as $a \geq 2$, $(d-1) \cdot \frac{a}{d} \geq (d-1) \cdot \frac{2}{d} =  1 + \frac{d-2}{d}$, and 
comparing this with \eqref{digitstrick2: e}  shows $\frac{d-2}{d} \leq \frac{d-2}{p}$, which implies that $p \leq d$, a contradiction.
\end{proof}

\begin{FermatFPTCorollary}  We have the following formula for $\fpt{f}$:
\[ \fpt{f} = \begin{cases}  \frac{1}{p^{\ell}} & \text{if } p^{\ell} \leq d < p^{\ell + 1} \text{ for some } \ell \geq 1 \\ 1 - \frac{a-1}{p} & \text{if } 0 < d < p \text{ and } p \equiv a \bmod d \text{ with } 1 \leq a < d \end{cases} \]
\end{FermatFPTCorollary}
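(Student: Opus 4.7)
The plan is to deduce this corollary directly from Theorem \ref{DiagonalFPT: T}, which for the Fermat polynomial $f = u_1 x_1^d + \cdots + u_d x_d^d$ takes a particularly simple form because $\ddelta = (\delta, \ldots, \delta)$ with $d$ equal entries $\delta = 1/d$, so $|\ddelta| = 1$. The ``add without carrying'' condition at position $e$ becomes $d \cdot \digit{\delta}{e} \leq p - 1$, so the quantity $L$ appearing in Theorem \ref{DiagonalFPT: T} is the largest $N$ (possibly $\infty$) for which this inequality holds for all $1 \leq e \leq N$. The whole proof then reduces to computing the base-$p$ digits of $\delta$ in each of the two cases.

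For Case 1, suppose $p^{\ell} \leq d < p^{\ell+1}$ with $\ell \geq 1$, so $1/p^{\ell+1} \leq \delta \leq 1/p^{\ell}$. Then $p^{\ell} \delta \leq 1$ forces $\digit{\delta}{e} = 0$ for $1 \leq e \leq \ell$, where in the boundary case $d = p^{\ell}$ one uses the non-terminating convention (so $\digit{\delta}{\ell} = 0$ and $\digit{\delta}{e} = p-1$ for $e > \ell$). Since $\delta \geq 1/p^{\ell+1}$, we have $\digit{\delta}{\ell+1} \geq 1$, and as $d \geq p$ this gives $d \cdot \digit{\delta}{\ell+1} \geq p$, so carrying is forced at position $\ell+1$ but nowhere earlier. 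Hence $L = \ell$ and $\tr{\ddelta}{L} = \0$, so Theorem \ref{DiagonalFPT: T} yields $\fpt{f} = 0 + 1/p^{\ell}$.

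For Case 2, suppose $0 < d < p$ with $p = d \emw + a$, $1 \leq a < d$. Remark \ref{deltaidentity: R} gives $\digit{\delta}{1} = \emw$. If $a = 1$, then $a \delta = \delta$, so iterating the recursion $\digit{\delta}{e+1} = \digit{(a \delta)}{e}$ shows that every digit of $\delta$ equals $\emw$; since $d \cdot \emw = p - 1$, the no-carrying condition holds at every position, giving $L = \infty$ and $\fpt{f} = d \delta = 1 = 1 - (a-1)/p$. If instead $a \geq 2$, then $d \geq 3$, so Lemma \ref{DigitsTrick: L} applies. At $e = 1$ we have $d \cdot \digit{\delta}{1} = d \emw = p - a \leq p - 1$, so no carrying occurs. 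At $e = 2$, Lemma \ref{DigitsTrick: L} gives $(d - 1) \digit{\delta}{2} \geq p + 1$, which forces $d \cdot \digit{\delta}{2} \geq p$, so carrying does occur. Thus $L = 1$, and Theorem \ref{DiagonalFPT: T} gives
\[
\fpt{f} = d \cdot \tr{\delta}{1} + \tfrac{1}{p} = \tfrac{d \emw + 1}{p} = \tfrac{p - a + 1}{p} = 1 - \tfrac{a-1}{p}.
\]

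The argument is essentially bookkeeping of base-$p$ digits: the one delicate point is the borderline case $d = p^{\ell}$ in Case 1, where the non-terminating convention for the expansion of $1/p^{\ell}$ must be used in order to keep $\digit{\delta}{\ell} = 0$ and locate the first carry precisely at position $\ell + 1$. Everything else is a direct application of Theorem \ref{DiagonalFPT: T} combined with Lemma \ref{DigitsTrick: L}.
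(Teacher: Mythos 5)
Your proposal is correct and follows essentially the same route as the paper: specialize Theorem \ref{DiagonalFPT: T} to $\ddelta = (\delta, \ldots, \delta)$ with $\delta = 1/d$, read off $L$ from the digits of $\delta$ in the two regimes $d \geq p$ and $d < p$, and in the latter case use Remark \ref{deltaidentity: R} (for $a=1$) together with Lemma \ref{DigitsTrick: L} (for $a \geq 2$). One small slip worth fixing: in Case 1 you write $1/p^{\ell+1} \leq \delta$, but the hypothesis $d < p^{\ell+1}$ gives the \emph{strict} inequality $1/p^{\ell+1} < \delta$, and strictness is exactly what is needed to conclude $\digit{\delta}{\ell+1} \geq 1$ (if $\delta$ were exactly $1/p^{\ell+1}$, the non-terminating convention would give $\digit{\delta}{\ell+1} = 0$ and $p-1$ thereafter, so the first carry would land at $\ell+2$ rather than $\ell+1$); as stated, ``$\delta \geq 1/p^{\ell+1}$ implies $\digit{\delta}{\ell+1} \geq 1$'' is not a valid inference, though the conclusion is true because the inequality is in fact strict.
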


\begin{proof}

If $p^{\ell} \leq d < p^{\ell + 1}$, then $\frac{1}{p^{\ell+1}} < \delta \leq \frac{1}{p^{\ell}}$.  Consequently, $\digit{\delta}{e} \text{ for } 1 \leq e \leq \ell \text{ and } \digit{\delta}{l+1} \neq 0$.  Adding $d$ copies of $\digit{\delta}{\ell+1}$ yields $d \cdot \digit{\delta}{\ell+1} \geq d \geq p^{\ell} \geq p$.  In the notation of Theorem \ref{DiagonalFPT: T}, we have that $L = \ell$, and as $\tr{\delta}{\ell} = 0$, $\fpt{f} = d \cdot \tr{\delta}{\ell} + \frac{1}{p^{\ell}} = \frac{1}{p^{\ell}}$.

We now assume that $p>d$.  If $a=1$, the identities in Remark \ref{deltaidentity: R} imply $ \digit{\delta}{e} = \emw$ for all $e \geq 1$. As $d \cdot \digit{\delta}{e} = d \cdot \emw = p-1$, it follows that $d$ copies of $\delta$ add without carrying.   By Theorem \ref{DiagonalFPT: T}, $\fpt{f}  = 1$.  Suppose now that $a \geq 2$ (which automatically implies $d > 2$).  Note that $d \cdot \digit{\delta}{1} = d \cdot \emw = p-a$ while $d \cdot \digit{\delta}{2} > p$ by Lemma \ref{DigitsTrick: L}.  By Theorem \ref{DiagonalFPT: T}, $\fpt{f} = d \cdot \tr{\delta}{1}  + \frac{1}{p} = \frac{d \cdot \emw}{p} + \frac{1}{p} = \frac{p-a+1}{p}$.
\end{proof}

In order to prove Theorem \ref{JumpingNumbers: T}, we will need the following lemmas.

\begin{Lemma}
\label{NoGathering: L}
If $d$ is not a $p^{\th}$ power, then $x_i \in \pideal{f}{N}{e}$ if and only if $0 \vleq d \cdot \k - p^e \cdot \vee_i \vl p^e \cdot \vone_d$ and $\binom{N}{\k} \neq 0 \bmod p$ for some $\k$ with $|\k| =N$.
\end{Lemma}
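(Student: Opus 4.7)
The plan is to prove the equivalence by directly computing the coefficients $\Coeff{\mu}{f^N}{e}$ in the expansion of $f^N$.

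\textbf{Setup.} Using \eqref{expansionf: e} (in the Fermat case $d_1 = \cdots = d_d = d$), I write $f^N = \sum_{|\k|=N} \binom{N}{\k}\uu^{\k}\x^{d\k}$. For each $\k$, decompose $d\k = p^e \bold{a}(\k) + \bold{r}(\k)$ with $\bold{r}(\k) \in [0, p^e)^d$, so that $\x^{\bold{r}(\k)} \in \Basis{e}$ and $\x^{d\k} = (\x^{\bold{a}(\k)})^{p^e}\x^{\bold{r}(\k)}$. Since $\bold{a}$ and $\bold{r}$ jointly determine $\k$, gathering terms gives
\[ \Coeff{\x^{\bold{r}}}{f^N}{e} \;=\; \sum_{\k\,:\,\bold{r}(\k)=\bold{r}} \binom{N}{\k}\uu^{\k/p^e}\x^{\bold{a}(\k)}, \]
and every such generator is homogeneous of degree $(dN - |\bold{r}|)/p^e$.

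\textbf{Sufficiency ($\Leftarrow$).} Given $\k$ satisfying the hypothesis, put $\mu = \x^{d\k - p^e\vee_i}$; the double inequality forces $\mu \in \Basis{e}$. Any $\k'$ contributing to $\Coeff{\mu}{f^N}{e}$ satisfies $|\bold{a}(\k')| = (dN - |d\k - p^e \vee_i|)/p^e = 1$, so $\bold{a}(\k') = \vee_j$ for some $j$. Then $d\k' = p^e\vee_j + d\k - p^e\vee_i$; for $\k'$ to have non-negative integer entries, $d$ must divide every coordinate of $p^e(\vee_j - \vee_i) + d\k$. Looking at coordinate $i$ when $j\neq i$ yields $d\mid p^e$, contradicting that $d$ is not a $p$-th power. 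Hence $j=i$, forcing $\k'=\k$ as the unique contributor. Therefore $\Coeff{\mu}{f^N}{e} = \binom{N}{\k}\uu^{\k/p^e} x_i$, a unit multiple of $x_i$, so $x_i \in \pideal{f}{N}{e}$.

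\textbf{Necessity ($\Rightarrow$).} Because $\pideal{f}{N}{e}$ is generated by homogeneous elements, I pass to the quotient $R/J_i$ where $J_i := (x_j \,:\, j\neq i) + (x_i^2)$: every monomial of degree $\geq 2$ lies in $J_i$, so generators of degree $\geq 2$ vanish modulo $J_i$. The image of $x_i$ in $R/J_i$ is non-zero, so it must be accounted for by a degree-one generator (treating the degenerate constant-generator case as producing the ideal $R$ separately). Applying the same analysis as in sufficiency in the opposite direction, each degree-one generator $\Coeff{\x^{\bold{r}}}{f^N}{e}$ has contributors $\k'$ with $\bold{a}(\k') = \vee_j$, and the requirement $d\mid r_l$ for all $l\neq j$ -- together with $d\nmid p^e$ -- pins down a unique coordinate $j$ such that the generator is a scalar multiple of $x_j$ alone. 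So if $x_i \in \pideal{f}{N}{e}$, some degree-one generator must be a non-zero scalar multiple of $x_i$, and the associated $\k$ meets all three conditions of the statement.

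\textbf{Main obstacle.} The crux is the necessity direction: one must argue that a degree-one element in the ideal forces the existence of a degree-one generator proportional to the specific variable $x_i$, rather than to a linear combination of multiple $x_j$'s. The hypothesis that $d$ is not a $p$-th power is used twice -- it rules out the bad divisibility $d\mid p^e$ that would otherwise allow a single generator to mix contributions indexed by distinct $\vee_j$'s, and it is precisely what guarantees the rigidity of the correspondence $\bold{r} \leftrightarrow \vee_j$.
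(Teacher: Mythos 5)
Your proof is correct and follows essentially the same route as the paper's: both expand $f^N$ via componentwise division-with-remainder $d\k = p^e\bold{a}(\k)+\bold{r}(\k)$, identify the relevant coefficient $\Coeff{\mu}{f^N}{e}$, and invoke $d\nmid p^e$ (i.e., $d$ not a $p$-th power) to force rigidity of the decomposition. The paper handles necessity by sandwiching $\pideal{f}{N}{e}$ inside the monomial ideal generated by the individual terms $\x^{\cc_{\k}}$ and using that a monomial in a monomial ideal must be divisible by a generator, which is just a different packaging of the degree-and-divisibility analysis you carry out modulo $J_i$.
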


\begin{proof}
For every $\k \in \mathbb{N}^d$, there is a unique element $\cc_{\k} \in \mathbb{N}^d$ such that $0 \vleq d \cdot \k - p^e \cdot \cc_{\k} \vl p^e \cdot \vone_d$.  If we set $\mu_{\k} : = \x^{d \cdot \k - p^e \cdot \cc_{\k} }$, it follows that $\mu_k \in \Basis{e}$ and that $\x^{d \cdot \k} = \x^{p^e \cdot \cc_{\k}} \mu_{\k}$.  Thus,  
\begin{equation}
 \label{nogathering1: e}
 f^N = \sum_{|\k| = N} \binom{N}{k} \uu^{\k} \x^{d \cdot \k} = \sum_{|\k| = N} \( \(\binom{N}{\k} \uu^{\k}\)^{1/p^e} \x^{\cc_{\k}}\)^{p^e} \mu_{\k}.  
\end{equation}

Let $I$ denote the ideal generated by the elements  $\(\binom{N}{\k} \uu^{\k}\)^{1/p^e} \x^{\cc_{\k}}$.  Apparently,  \eqref{nogathering1: e} shows that $f^N$ is in $\bracket{I}{e}$, and applying Lemma \ref{LittleL: L} then shows $\pideal{f}{N}{e} \subseteq I$.  If $x_i \in \pideal{f}{N}{e}$, then $x_i \in I$, and so $x_i$ must be a unit multiple of one of the monomial generators of $I$.  We conclude that $x_i = \x^{\cc_{\k}}$ for some $\k$ with $\binom{N}{\k} \neq 0 \bmod p$.

Next, suppose that $0 \vleq d \cdot \k - p^e \cdot \vee_i \vl p^e \cdot \vone_d$ and $\binom{N}{\k} \neq 0 \bmod p$ for some $\k$ with $|\k| =N$, so that $\x^{d \cdot \k} = x_i^{p^e} \mu_{\k}$ is a supporting monomial of $f^N$.  To show that $x_i \in \pideal{f}{N}{e}$, it suffices to show that $\x^{d \cdot \k}$ is the only supporting monomial of $f^N$ that is an $R^{p^e}$-multiple of $\mu_{\k}$.  Let $\x^{d \cdot \bold{\kappa}}$ be another such monomial, so that $\x^{d \cdot \bold{\kappa}} = \x^{p^e \cc}\mu_{\k} \text{ and } \x^{d \cdot \k } = x_i^{p^e} \mu_{\k}$.  Solving for $\mu_{\k}$ in these expressions shows $\mu_{\k} = \x^{d \cdot \k - p^e \cc} = \x^{d \cdot \bold{\kappa} - p^e \vee_i}$, and so
\begin{equation}
\label{nogathering2: e}
d \cdot (\k - \bold{\kappa} ) = p^e \cdot (\cc - \vee_i).
\end{equation}

As $| \k | = | \bold{\kappa} | = N$, it follows from \eqref{nogathering2: e} that $| \cc | = | \vee_i | = 1$, so that $\cc = \vee_j$ for some $j$.  If $j \neq i$, then \eqref{nogathering2: e} shows that $d(k_j - \kappa_j) = p^e$, which contradicts the assumption that $d$ is not a $p^{\th}$ power.  Thus, $\cc = \vee_i$, and so $\k = \bold{\kappa}$ by \eqref{nogathering2: e}.
\end{proof}

\begin{Notation}
From now on, we assume $p = d \cdot \emw + a$ for some $\emw \geq 0$ and $1 \leq a < d$.
\end{Notation}

\begin{Lemma}
\label{Bounds: L}  \ We have the following inequalities:   
\begin{enumerate}
\item \label{Bounds: L2} $p < d( 2 \emw + \up{2a \delta} -1) < 2p$.% and $d \cdot \(2 \emw + \up{2a \delta}\) \geq 2p$.
\item \label{Bounds: L1} If $p < a (d -1)$, then $(d+1) \cdot \emw + \up{2a \delta} \leq p$.
\item \label{Bounds: L3} If $p> a(d-1)$, then $p < d ( \emw + a -1 ) < 2p$.
\end{enumerate}
\end{Lemma}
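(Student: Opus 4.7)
The plan is to prove each of the three inequalities by substituting $p = d \cdot \emw + a$ directly and reducing to elementary estimates in $a$ and $d$. Throughout, a natural case split will drive the argument: since $1 \leq a < d$, the quantity $2a/d$ lies in $(0,2)$, so $\up{2a\delta}$ equals $1$ when $2a \leq d$ and $2$ when $2a > d$. The standing context (in which this lemma is applied via Theorem \ref{JumpingNumbers: T}) assumes $p > d$, hence $\emw \geq 1$, and we may freely use $d < p$.

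For part (1), I would expand
\[ d(2\emw + \up{2a\delta} - 1) \;=\; 2(p - a) + d\bigl(\up{2a\delta} - 1\bigr). \]
The upper bound $< 2p$ then amounts to $d(\up{2a\delta} - 1) < 2a$, which is immediate in each case ($0 < 2a$ when $\up{2a\delta} = 1$; $d < 2a$ when $\up{2a\delta} = 2$, by definition of the latter case). The lower bound $p < d(2\emw + \up{2a\delta} - 1)$ rearranges to $2a + d(1 - \up{2a\delta}) < p$, and in each case the left side is bounded above by $d$, which is strictly less than $p$.

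For part (2), the desired inequality rewrites as $\emw + \up{2a\delta} \leq a$. The hypothesis $p < a(d-1)$ becomes $d\emw < a(d-2)$, that is, $\emw < a - 2a/d$. In the case $2a \leq d$ this forces $\emw < a$, so $\emw \leq a-1$ and $\emw + 1 \leq a$; in the case $2a > d$ one has $2a/d > 1$, so $\emw < a-1$, giving $\emw \leq a-2$ and $\emw + 2 \leq a$. Either way the conclusion follows.

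For part (3), which is the most delicate, the implicit requirement is $a \geq 2$ (the regime in which this lemma is invoked in the proof of Theorem \ref{JumpingNumbers: T}); note that $a \geq 2$ and $a < d$ force $d \geq 3$. The upper bound $d(\emw + a - 1) < 2p$ reduces algebraically to $a(d-2) < d(\emw+1)$, and the hypothesis $p > a(d-1)$ gives the stronger $a(d-2) < d\emw$. The lower bound $p < d(\emw + a - 1)$ reduces to $a < d(a-1)$, which holds because $a \geq 2$ and $d \geq a+1$ together yield $d(a-1) \geq a+1 > a$. The main obstacle here is really just careful bookkeeping of which of the standing hypotheses ($p > d$, $\emw \geq 1$, $a \geq 2$) is used where, rather than any deep estimate.
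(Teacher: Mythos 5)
Your proof is correct and mirrors the paper's approach: elementary algebra from the identity $p = d\emw + a$. Where the paper invokes the two-sided bound $2a\delta \le \up{2a\delta} < 2a\delta + 1$, you perform an equivalent explicit case split on whether $2a \le d$ or $2a > d$; the paper leaves part (3) to the reader, and you correctly supply the details, including the observation that $a \ge 2$ is an implicit standing hypothesis required for that part.
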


\begin{proof}
The first point follows by applying the inequality $2a\delta \leq \up{2a \delta} < 2a\delta + 1$ and the identity $p = d \cdot \emw + a$.  For the second point, note that $p = d \cdot \emw + a < ad -a$
by hypothesis, and it follows that $\emw + 2a\delta < a$.  Adding $d \cdot \emw$ to both sides yields $(d+1) \cdot \emw + 2a\delta < a + d \cdot \emw = p$.  The proof of the third point is similar, and is left to the reader.
\end{proof}

%\begin{Remark}
%\label{deltaidentities2: R}
%Using the identities in Remark \ref{deltaidentity: R}, we see that  $p-a = d \cdot \digit{\delta}{1}$ and 
%\[\sigma_p = \digit{\delta}{1} + \up{2 a \delta} = \digit{\delta}{1} + \up{2 \( \delta \cdot p -  \digit{\delta}{1}\)}.\]
%These identities will be used without mention in proving the last two parts of Theorem \ref{JumpingNumbers: T}.
%\end{Remark}

\begin{JN1}
If $a \geq 2$ and $p < a(d-1)$, then 
\begin{enumerate}
\item $\test{R}{f}{\( \frac{(d+1) \cdot \emw + \up{2a \delta}-1}{p} + \frac{p-1}{p^2} + \cdots + \frac{p-1}{p^e} \) } = \m$ for all $e \geq 1$, and 
\item $\test{R}{f}{\( \frac{(d+1) \cdot \emw + \up{2a \delta}}{p} \) } \neq \m$.
\end{enumerate}
In particular, $\frac{(d+1) \cdot \emw + \up{2a \delta}}{p} \in (0,1]$ is an $F$-jumping number of $f$. 
\end{JN1}

\begin{proof} By Lemma \ref{Bounds: L},  $\frac{(d+1) \cdot \emw + \up{2a \delta}}{p} \in (0,1]$ .  By Lemma \ref{DigitsTrick: L}, $(d-1) \cdot \digit{\delta}{2} \geq p+1$.  Thus, there exists non-negative integers $l_1, \cdots, l_{d-1}$ such that $\sum l_i = p-1$ and $l_i \leq \digit{\delta}{2}$ for $1 \leq i \leq d-1$, with the  inequality being strict for at least one  $i$, which we are free to choose.  In what follows, we assume that $l_{d-1} < \digit{\delta}{2}$.  Fix $e \geq 3$, and set 
\[ \llam_e:= \( \frac{\emw}{p} + \frac{l_1}{p^2}, \cdots, \frac{\emw}{p} + \frac{l_{d-2}}{p^2}, \frac{\emw}{p} + \frac{l_{d-1}}{p^2} + \frac{p-1}{p^3} + \cdots + \frac{p-1}{p^e}, \frac{2 \emw +\up{2a \delta} -1}{p} \). \]

By Remark \ref{deltaidentity: R}, $\digit{\delta}{1} = \emw$, and as $l_{d-1} < \digit{\delta}{2}$, the first $d-1$ entries of $\llam_e$ are less than or equal to $\tr{\delta}{2}$.  Set $\k:= p^e \llam_e$.  It follows that the first $d-1$ entries of $d \cdot \k$ are less than or equal to $d \cdot p^e \tr{\delta}{2}$, and thus strictly less than $p^e$ while, by Lemma \ref{Bounds: L}, the last entry of $d \cdot \k$ is \emph{strictly} between $p^e$ and $2p^e$.  Thus, $\0 \vleq d \cdot \k - p^e \cdot \vee_d \vl p^e \cdot \vone_d$.  %If $\mu_{\k} : = \x^{ d \cdot \k - p^e \vee_d}$, then $\mu_{\k} \in \Basis{e}$ and $\x^{d \cdot \k} = x_d^{p^e} \cdot \mu_{\k}$.  By definition, $\Coeff{\mu_{\k}}{\x^{d \cdot \k}}{e} = x_d$.  
By construction, the entries of $\k$ add without carrying (in base $p$), so $\binom{|\k|}{\k} \neq 0 \bmod p$.  Finally, 
\begin{align*} |\k| &= p^e \cdot \( \frac{(d+1) \cdot \emw + \up{2a \delta} - 1}{p} + \sum_{i=1}^{d-1} \frac{l_i}{p^2} + \frac{p-1}{p^3} + \cdots + \frac{ p-1}{p^e} \) \\ & = p^e \cdot \( \frac{(d+1) \cdot \emw + \up{2a \delta} - 1}{p} + \frac{p-1}{p^2} + \frac{p-1}{p^3} \cdots + \frac{ p-1}{p^e} \).
\end{align*}
We then apply Lemmas \ref{NoGathering: L} and \ref{Stabilization: L} to deduce that \[ x_d \in \pideal{f}{|\k|}{e} = \test{R}{f}{\( \frac{(d+1) \cdot \emw + \up{2a \delta} - 1}{p} + \frac{p-1}{p^2} + \cdots + \frac{p-1}{p^e} \) }.\]  As this argument is symmetric in the variables, the first claim follows.

We now show that $\test{R}{f}{\frac{(d+1) \cdot \emw + \up{2a \delta}}{p}} \neq \m$.  By way of contradiction, suppose that $x_1$ is in $\test{R}{f}{\frac{(d+1) \cdot \emw + \up{2a \delta}}{p}} = \pideal{f}{(d+1) \cdot \emw + \up{2a \delta}}{}$.  By Lemma \ref{NoGathering: L}, there exists $\k \in \mathbb{N}^d$ with $| \k | = (d+1) \cdot \emw + \up{2a \delta}$ such that $0 \vleq d \cdot \k - p \cdot \vee_1 \vl p \cdot \vone_d$.  Restated, $0 \vleq \frac{1}{p} \cdot \k - \delta \cdot \vee_1 \vl \delta \cdot \vone_d$, and applying Lemma \ref{Truncation: L}  shows $\frac{1}{p} \cdot k_i \leq \tr{\delta}{1} = \frac{\digit{\delta}{1}}{p} = \frac{\emw}{p}$ for all $2 \leq i \leq d$.  These same inequalities also show $\frac{1}{p} \cdot k_1 < 2 \delta$, and summing these bounds shows
\[   (d+1) \cdot \emw +  \up{2a\delta} = | \k | < (d-1) \cdot \emw + 2 p \delta. \]  Gathering the multiples of $\emw$ and multiplying through by $d$ implies $2 d \cdot \emw + d \cdot \up{2a\delta} < 2p$, which this is impossible as $2d \cdot \emw  +d \cdot \up{2a \delta} \geq 2d\cdot \emw + d \cdot 2a\delta = 2( d \cdot \emw + a) = 2p$.  We conclude that $x_1$ (and by symmetry, no variable) is in $\test{R}{f}{\frac{(d+1) \cdot \emw + \up{2a \delta}}{p}}$. 
\end{proof}

\begin{JN2}
\label{JumpingNumber2: L}
If  $a \geq 2$ and  $p> a (d-1)$, then $\test{R}{f}{ \( \frac{p-1}{p} + \cdots + \frac{p-1}{p^e} \) } = \m$ for all $e \geq 1$.  In particular,  the only $F$-jumping numbers of $f$ in $(0,1]$ are $\fpt{f}$ and $1$.
\end{JN2}

\begin{proof}
As in the proof of Theorem \ref{JumpingNumbers: T}: Part I, Lemma \ref{DigitsTrick: L} guarantees there exists non-negative integers $l_1, \cdots, l_{d-1}$ such that $\sum_{i=1}^{d-1} l_i = p-1$ and $l_i \leq \digit{\delta}{2}$ for $1 \leq i \leq d-1$, with at least one inequality being strict.  We again assume $l_{d-1} < \digit{\delta}{2}$.  Fix $e \geq 3$, and let 
\[ \llam_e:= \( \frac{\emw}{p} + \frac{l_1}{p^2}, \cdots, \frac{\emw}{p} + \frac{l_{d-2}}{p^2}, \frac{\emw}{p} + \frac{l_{d-1}}{p^2} + \frac{p-1}{p^3} + \cdots + \frac{p-1}{p^e}, \frac{\emw+a-1}{p} \). \]
Set $\k = p^e \cdot \llam_e$. By construction, the entries of $\k$ add without carrying, so $\binom{|\k|}{\k} \neq 0 \bmod p$.  As in the proof of Part I of Theorem \ref{JumpingNumbers: T}, one may verify that $\0 \vleq d \cdot \k - p^e \cdot \vee_d \vl p^e \cdot \vone_d$, and
\begin{align*} |\k| &= p^e \cdot \( \frac{d \cdot \emw + a - 1}{p} + \sum_{i=1}^{d-1} \frac{l_i}{p^2} + \frac{p-1}{p^3} + \cdots + \frac{ p-1}{p^e} \) \\ & = p^e \cdot \( \frac{p-1}{p} + \frac{p-1}{p^2} + \frac{p-1}{p^3} \cdots + \frac{ p-1}{p^e} \). \end{align*} Once more,  Lemmas \ref{NoGathering: L} and \ref{Stabilization: L} imply $x_d \in \pideal{f}{|\k|}{e} = \test{R}{f}{\( \frac{p - 1}{p} + \frac{p-1}{p^2} + \cdots + \frac{p-1}{p^e} \) }$. By the symmetry of this argument, we conclude that $\m \subseteq \test{R}{f}{ \( \frac{p-1}{p} + \cdots + \frac{p-1}{p^e} \) }$.
\end{proof}

%\bluenote{
%\begin{Question}  Let $f \in \mathbb{Z}[\underline{\x}]$.
%\begin{enumerate}
%\item For what conditions on $p$ must $p^e \cdot \fpt{f_p} \in \mathbb{N}$ for some $e \geq 1$ (which may depend on $p$)?  Under what conditions on $p$ is there a uniform $e \geq 1$ such that $p^e \cdot \fpt{f} \in \mathbb{N}$? We may answer this question for diagonal hypersurfaces (and every other example in the literature):  $p^e \cdot \fpt{f} \in \mathbb{N}$ if and only if $\fpt{f_p} \neq \lct{f}$.  In the Fermat case, we may choose $e=1$ uniformly.  For general diagonal hypersurfaces, this is not clear.  Note, in my upcoming article on binomials, we see examples where $p^e \cdot \fpt{f_p} \notin \mathbb{N}$ for any $e \geq 1$ and $\fpt{f_p} \neq \lct{f}$, so these two conditions need not be equivalent.  I still think it is interesting to see when $\fpt{f_p}$ has a power of $p$ in its denominator.  My article on binomials might give a good hint.
%\item If $p^e \cdot \fpt{f_p} \cdot \mathbb{N}$, then must $p^e \lambda \in \mathbb{N}$ for any jumping number $\lambda$?  This is exactly what we see happens in the Fermat case.  
%\item A question of Mircea (in the CA seminar):  Must  \[ ^{\#} \set{ \text{$F$-jumping numbers of $f_p$ contained in $(\fpt{f}, 1)$ }}\] eventually stabilize?
%\item Do these results say anything about the roots of Bernstein-Sato polynomials?
%\end{enumerate}
%\end{Question}
%}

%-----------------------------
\bibliographystyle{alpha}
\bibliography{refs}
%-----------------------------

\end{document}